\newtheorem{theorem}{Theorem}
\newtheorem{corollary}[theorem]{Corollary}
\newtheorem{lemma}[theorem]{Lemma}
\begin{document}
	
\title{A Projection Method for Metric-Constrained Optimization}
\author{Nate Veldt$^1$}
\address{$^1$Purdue University, Mathematics Department}
\author{David F. Gleich$^2$} 
\address{$^2$Purdue University, Computer Science Department}
\author{Anthony Wirth$^3$}
\address{$^3$The University of Melbourne, Computing and Information Systems School}
\author{James Saunderson$^4$}
\address{$^4$Monash University, Department of Electrical and Computer Systems Engineering}

\begin{abstract}
	We outline a new approach for solving optimization problems which enforce triangle inequalities on output variables. We refer to this as metric-constrained optimization, and give several examples where problems of this form  arise in machine learning applications and theoretical approximation algorithms for graph clustering. Although these problem are interesting from a theoretical perspective, they are challenging to solve in practice due to the high memory requirement of black-box solvers. In order to address this challenge we first prove that the metric-constrained linear program relaxation of correlation clustering is equivalent to a special case of the metric nearness problem. We then developed a general solver for metric-constrained linear and quadratic programs by generalizing and improving a simple projection algorithm originally developed for metric nearness. We give several novel approximation guarantees for using our framework to find lower bounds for optimal solutions to several challenging graph clustering problems. We also demonstrate the power of our framework by solving optimizing problems involving up to $10^8$ variables and $10^{11}$ constraints.
\end{abstract}

\maketitle

\section{Introduction}
Learning pairwise distance scores among objects in a dataset is an important task in machine learning and data mining. Principled methods abound for how to approach this task in different contexts. A number of these methods rely on solving a convex optimization problem involving metric constraints of the form $x_{ij} \leq x_{ik} + x_{jk}$, where, for instance, $x_{ij}$ represents the distance score between objects $i$ and $j$. Solving metric-constrained optimization problems has been applied to semi-supervised clustering~\cite{biswas2014semi,batra2008semi}, joint clustering of image segmentations~\cite{Vitaladevuni2010coclustering,glasner2011contour}, the metric nearness problem~\cite{brickell2008metricnearness,dhillon2003MNreport,dhillon2004tfa}, and sensor location~\cite{gentile2005sensor,gentile2007distributed}. Metric-constrained linear programs also arise frequently as convex relaxations for NP-hard graph clustering problems~\cite{chawla2015near,veldt2017lamcc,leighton1999multicommodity,Agarwal2008metricmod,poljak1995maximum}.

Obtaining distance scores by solving a metric-constrained optimization problem is of great theoretical interest, but can be very challenging in practice since these problems often involve $O(n^2)$ variables and $O(n^3)$ constraints, where $n$ is the number of items in a dataset or number of nodes in a derived graph. In this paper we specifically consider a class of linear programming relaxations of NP-hard graph clustering objectives that have been extensively studied in theory, but rarely solved in practice due to the memory constraints of traditional black-box optimization software. The goal in our work is to develop {practical} solvers for these and other related metric-constrained optimization tasks.

The starting point in our work is an observation that the linear programming relaxation for correlation clustering~\cite{Bansal2004correlation,chawla2015near} is equivalent to a special case of the $\ell_1$ metric nearness problem~\cite{brickell2008metricnearness}. 
Based on this, we develop a general strategy for metric-constrained optimization that is related to the iterative \emph{triangle-fixing} algorithms that Dhillon et al.~developed for metric nearness~\cite{dhillon2004tfa}. Our approach applies broadly to any metric-constrained linear or quadratic program, and also comes with a more robust stopping criterion for the underlying iterative procedure it employs. This stopping criterion is based on a careful consideration of the dual objective function and a rounding step that is applied when our solver is close to convergence. This leads to significantly stronger constraint satisfaction and output guarantees. We additionally provide several novel results and strategies for setting a key parameter, $\gamma$, which governs the relationship between a metric-constrained linear program and the related quadratic program that can be more easily solved in practice using our iterative procedure.

We demonstrate the success of our metric-constrained optimization framework by obtaining high-quality solutions to convex relaxations of NP-hard clustering objectives on a much larger scale than has previously been accomplished in practice. In particular, we solve the Leighton-Rao relaxation of sparsest cut on graphs with thousands of nodes, as well as the LP relaxation of correlation clustering on real world networks with over 11 thousand nodes. In other words, we are able to use our techniques to solve optimization problems with $10^8$ variables and $7 \times 10^{11}$ constraints.

\section{Background and Related Work}
We briefly outline three areas in machine learning and optimization that are closely related to our work on developing solvers for metric-constrained optimization problems.

\subsection{Metric Learning and Metric Nearness} 

The problems we study are tangentially related to distance metric learning~\cite{xingdml,bellet2013surveymetric}, in which one is given a set of points in a metric space and the goal is to learn a new metric that additionally respects a set of must-link and cannot-link constraints. Specific results within the metric learning literature involve solving a metric-constrained optimization task: Batra et al. introduced a linear program with metric constraints defined on a set of \emph{code words}~\cite{batra2008semi}. Biswas and Jacobs considered a similar quadratic program which includes a full set of $O(n^3)$ triangle inequality constraints~\cite{biswasqp}. The authors note that such a QP would be very challenging to solve in practice. Our work is even more closely related to the metric nearness problem~\cite{brickell2008metricnearness}, which seeks to learn metric distance scores that are as close as possible (with respect to an $\ell_p$ norm) to a set of non-metric dissimilarity scores. Dhillon et al.\ used Dykstra's projection method to solve $\ell_p$ metric nearness problems when $p = 1,2$ or $\infty$, and applied generalized Bregman projections for all other $p$~\cite{dhillon2004tfa}. 

\subsection{Implementing Clustering Approximation Algorithms} 

A number of triangle inequality constrained LP relaxations can be rounded to produce good approximation guarantees for NP-hard clustering problems~\cite{veldt2017lamcc,leighton1999multicommodity,chawla2015near}. However, these are rarely implemented due to memory constraints.
For correlation clustering, Wirth noted that the LP can be solved more efficiently by using a multicommodity flow formulation of the problem, though this is still very expensive in practice~\cite{wirth2004approximation}. Gael and Zhu employed an \emph{LP chunking} technique which allowed them to solve the correlation clustering relaxation on graphs with up to nearly 500 nodes~\cite{van2007correlation}.
The LP rounding algorithm Charikar et al.~\cite{charikar2005clustering} inspired others to use metric-constrained LPs for modularity clustering~\cite{Agarwal2008metricmod} and joint-clustering of image segmentations~\cite{Vitaladevuni2010coclustering,glasner2011contour}. In practice these algorithms scaled to only a few hundred nodes when a full set of $O(n^3)$ constraints was included. In the case of sparsest cut, Lang and Rao developed a practical algorithm closely related to the original Leighton-Rao algorithm~\cite{lang1993finding}, which was later evaluated empirically by Lang et al.~\cite{lang2009empirical}. However, the algorithm only heuristically solves the underlying multicommodity flow problem, and therefore doesn't satisfy the same theoretical guarantees. 

\subsection{Projection Methods for Optimization}

The algorithmic framework we develop for solving metric-constrained optimization problems is based on a well-known projection method developed by Dykstra~\cite{dykstra1983algorithm}. Dykstra's method is also equivalent to Han's method~\cite{han1988successive}, and equivalent to Hildreth's method in the case of half-space constraints~\cite{hildreth1957quadratic}. Certain variants of the problem are also equivalent to performing coordinate descent on a dual objective function~\cite{tibshirani2017dykstra}. For detailed convergence results and empirical evaluations of different projection algorithms, see~\cite{bauschke2015projection,escalante2011altproj,cegielski2012iterative,censor2006computational}.

\section{Metric-Constrained LPs and Graph Clustering}
Formally, we define a metric-constrained optimization problem to be an optimization problem involving constraints of the form $x_{ij} \leq x_{ik} + x_{ik}$, where $x_{ij}$ is a non-negative variable representing the learned distance between two objects $i$ and $j$ in a given dataset. Optimization problems of this form arise very naturally in the study of graph clustering objectives, since any non-overlapping clustering $\mathcal{C}$ for a graph $G = (V,E)$ is in one-to-one correspondence with a set of binary variables $\vx = (x_{ij})$ satisfying triangle inequality constraints:
\[ \begin{cases} x_{ij} \in \{ 0, 1\}  & \text{ for all $i,j$ and } \\
x_{ij} \leq x_{ik} + x_{jk} & \text{ for all $i,j,k$} \end{cases} \iff \exists \text{ $\mathcal{C}$ s.t. }  x_{ij} = \begin{cases} 0 & \text{ if $i,j$ are together in $\mathcal{C}$} \\ 1 & \text{ otherwise}. \end{cases}   \]
In this section, we specifically consider a number of metric-constrained \emph{linear} programs, most of which arise as a relaxation of an NP-hard graph clustering task. We also prove a new equivalence between the metric nearness and the correlation clustering LPs (Theorem~\ref{thm:metric-cc}).

\subsection{Metric Nearness}
The {Metric Nearness Problem}~\cite{brickell2008metricnearness} seeks the nearest \emph{metric} matrix~$\textbf{X}^* = (x_{ij}^*)$ to a \emph{dissimilarity} matrix~$\textbf{D}=(d_{ij})$. Here, a dissimilarity matrix is a non-negative, symmetric, zero-diagonal matrix, and a \emph{metric} matrix is a dissimilarity matrix whose entries satisfy metric constraints. If~$M_n$ represents the set of metric matrices of size $n \times n$, then the problem can be formalized as follows:
\begin{equation}
\label{mnp}
\textbf{X}^* = \mbox{argmin}_{\textbf{X} \in M_n} \left(\sum_{i\neq j} w_{ij} \left |(x_{ij} - d_{ij})\right|^p   \right)^{1/p}
\end{equation}
where $w_{ij} \geq 0$ is a weight indicating how strongly we wish $\textbf{X}^*$ and $\mD$ to coincide at entry $ij$. When $p = 1$, the problem can be cast as a linear program by introducing variables $\mM = (m_{ij})$:

\begin{equation}
\label{mn1}
\begin{array}{lll} \text{minimize} & \sum_{i < j} w_{ij} m_{ij} & \\ \subjectto  &  x_{ij} \leq x_{ik} + x_{jk} & \text{ for all $i,j,k$} \\ & x_{ij} - d_{ij} \leq m_{ij} & \text{ for all $i,j$} \\ & d_{ij}-x_{ij} \leq m_{ij} &\text{ for all $i,j$} \end{array}
\end{equation}
where the last two constraints ensure that at optimality, $m_{ij} = |x_{ij} - d_{ij}|$.

\subsection{Correlation Clustering} 
{Correlation clustering} is an NP-hard problem for partitioning a signed graph $G = (V,W^+,W^-)$~\cite{Bansal2004correlation,CCEncy}. Each pair of distinct nodes $i$ and $j$ in $G$ possesses two non-negative weights, $w_{ij}^+ \in W^+$ and $w_{ij}^- \in W^-$, indicating measures of similarity and dissimilarity, respectively.
The goal is to cluster the nodes in a way that minimizes mistakes, where the mistake at pair $(i,j)$ is $w_{ij}^+$ if $i$ and $j$ are separated, and $w_{ij}^-$ if they are together. The objective can be written formally as an integer linear program:
\begin{equation}
\begin{array}{lll} \text{minimize}  & \sum_{i<j} w_{ij}^+ x_{ij} + w_{ij}^- (1-x_{ij}) &\\ \subjectto  & x_{ij} \leq x_{ik} + x_{jk} & \text{ for all $i,j,k$} \\ & x_{ij} \in \{0,1\} & \text{ for all $i,j$.} \end{array}
\label{eq:cc}
\end{equation}
An equivalent problem is to maximize the weight of agreements, which is the same at optimality but different in terms of approximation algorithms~\cite{Bansal2004correlation}. When we relax the above ILP by replacing $x_{ij} \in \{0,1\}$ with the constraint $x_{ij} \in [0,1]$, this becomes a metric-constrained linear program that has been extensively studied in the literature. Semidefinite programming relaxations for correlation clustering have also been studied~\cite{charikar2005clustering, swamy2004correlation}, and many heuristic algorithms have also been developed. However, the the best approximation results for minimizing disagreements in both general weighted graphs (an $O(\log n)$ approximation~\cite{charikar2005clustering,Demaine2003,Emanuel2003}), and complete unweighted graphs (an approximation slightly better than 2.06~\cite{chawla2015near}) depend on first solving the LP relaxation. The best results for special weighted cases and deterministic pivoting algorithms also rely on solving the LP relaxation~\cite{veldt2017lamcc, zuylen2009deterministic,puleo2016cc,puleo2015cc}.

Our first theorem shows that LP~\eqref{eq:cc} and LP~\eqref{mn1} are in fact equivalent.
\begin{theorem} 
	\label{thm:metric-cc}
	Consider an instance of correlation clustering $G = (V,W^+,W^-)$ and set $w_{ij} = |w_{ij}^+ - w_{ij}^-|$. Define an $n \times n$  matrix $\mD = (d_{ij})$ where $d_{ij} = 1$ if $w_{ij}^- > w_{ij}^+$ and $d_{ij} = 0$ otherwise. Then $\mX = (x_{ij})$ is an optimal solution to the LP relaxation of~\eqref{eq:cc} if and only if $(\mX, \mM)$ is an optimal solution for~\eqref{mn1}, where $\mM = (m_{ij}) = ( |x_{ij} - d_{ij}| )$.
\end{theorem}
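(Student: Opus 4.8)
The plan is to show that, after the stated substitution, the objective functions of the two linear programs agree up to an additive constant that does not depend on the variables, and that the feasible regions are in natural correspondence once we eliminate the auxiliary variables $\mM$. First I would rewrite the correlation clustering objective $\sum_{i<j} w_{ij}^+ x_{ij} + w_{ij}^-(1-x_{ij})$ by splitting each pair $(i,j)$ into two cases according to whether $w_{ij}^- > w_{ij}^+$ or not. When $d_{ij} = 0$ (i.e.\ $w_{ij}^+ \ge w_{ij}^-$) the contribution is $w_{ij}^- + (w_{ij}^+ - w_{ij}^-)x_{ij} = w_{ij}^- + w_{ij}\,|x_{ij} - d_{ij}|$, using $x_{ij} \ge 0 = d_{ij}$; when $d_{ij} = 1$ (i.e.\ $w_{ij}^- > w_{ij}^+$) the contribution is $w_{ij}^+ + (w_{ij}^- - w_{ij}^+)(1 - x_{ij}) = w_{ij}^+ + w_{ij}\,|x_{ij} - d_{ij}|$, using $x_{ij} \le 1 = d_{ij}$. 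In both cases the per-pair term equals $\min(w_{ij}^+, w_{ij}^-) + w_{ij}\,|x_{ij} - d_{ij}|$, so the CC objective is $C + \sum_{i<j} w_{ij}\,|x_{ij} - d_{ij}|$ where $C = \sum_{i<j}\min(w_{ij}^+, w_{ij}^-)$ is a constant. This is exactly the objective of~\eqref{mn1} once we recognize that at optimality $m_{ij} = |x_{ij} - d_{ij}|$.

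Next I would handle the feasible regions. The metric constraints $x_{ij} \le x_{ik} + x_{jk}$ are literally identical in~\eqref{eq:cc} and~\eqref{mn1}, so the only thing to check is the interaction with the box constraints $x_{ij} \in [0,1]$ in the CC relaxation versus the fact that~\eqref{mn1} as written (being the $\ell_1$ metric nearness LP) imposes nonnegativity of the $x_{ij}$ but not an explicit upper bound of $1$. Here I would argue that the specific choice $d_{ij} \in \{0,1\}$ forces any \emph{optimal} solution of~\eqref{mn1} to satisfy $x_{ij} \le 1$ automatically: if some $x_{ij} > 1$, then since all $d_{k\ell} \le 1$, one can truncate that coordinate (or the whole vector via $\min(x_{ij},1)$) without violating any triangle inequality — truncation to $[0,1]$ preserves metric constraints because $t \mapsto \min(t,1)$ is $1$-Lipschitz and the constraints are preserved under coordinatewise application of such maps — while strictly decreasing $\sum w_{ij}|x_{ij}-d_{ij}|$ unless $w_{ij}=0$. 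Conversely any feasible point of the CC relaxation is feasible for~\eqref{mn1} (with $m_{ij} = |x_{ij}-d_{ij}|$), since $[0,1]$-valued vectors satisfying triangle inequalities are clearly feasible. Thus the two programs optimize objectives differing by the constant $C$ over the same effective feasible set, giving the biconditional on optimal solutions; and $(\mX,\mM)$ optimal for~\eqref{mn1} forces $\mM = (|x_{ij}-d_{ij}|)$ because $m_{ij}$ can always be driven down to that value.

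The main obstacle I anticipate is precisely the upper-bound subtlety: LP~\eqref{mn1} is stated without an $x_{ij}\le 1$ constraint, so the equivalence is not one of feasible \emph{regions} but of optimal \emph{solutions}, and the truncation argument needs to be stated carefully — in particular I must make sure that applying $\min(\cdot,1)$ coordinatewise to $\mX$ cannot create a triangle-inequality violation, and that it does not increase the objective (it never does, since each $d_{ij}\le 1$ means $|{\min(x_{ij},1)}-d_{ij}| \le |x_{ij}-d_{ij}|$). A secondary, purely bookkeeping point is the sign and case analysis with $w_{ij} = |w_{ij}^+ - w_{ij}^-|$: one must track that in the case $d_{ij}=0$ the sign works out so that the linear term has coefficient $+w_{ij}$ on $x_{ij}$, and in the case $d_{ij}=1$ it has coefficient $+w_{ij}$ on $(1-x_{ij})$, which is where the choice of $d_{ij}$ as the indicator of $w_{ij}^- > w_{ij}^+$ is used. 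Once these two points are nailed down, the theorem follows by combining the objective identity with the feasibility correspondence.
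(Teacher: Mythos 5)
Your proposal is correct and follows the same overall strategy as the paper: rewrite the correlation clustering objective as a weighted $\ell_1$ distance to the matrix $\mD$, and then show that the box constraints $0\le x_{ij}\le 1$ may be dropped because every optimal solution of the metric nearness LP satisfies them automatically (the paper isolates exactly this as Lemma~\ref{lemma1} and invokes it at the end of the proof). Two details differ. First, you keep general weight pairs and absorb $\sum_{i<j}\min(w_{ij}^+,w_{ij}^-)$ into an additive constant, whereas the paper first reduces, without loss of generality, to the case where at most one of $w_{ij}^+,w_{ij}^-$ is positive and sets $w_{ij}=\max\{w_{ij}^+,w_{ij}^-\}$; your bookkeeping makes that reduction explicit rather than asserted. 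Second, for the key feasibility step that truncating $\mX$ to $[0,1]$ cannot violate a triangle inequality, the paper performs an exhaustive case analysis (eight cases written out, twenty-seven in general), while you propose a uniform argument; this is a genuine simplification, but your stated justification (``$t\mapsto\min(t,1)$ is $1$-Lipschitz'') is not sufficient on its own --- $t\mapsto t-5$ is also $1$-Lipschitz and destroys triangle inequalities --- and you should treat both bounds at once rather than only the upper one (the lower bound is not written explicitly in~\eqref{mn1}, though it is implied by the full family of triangle inequalities). The clean replacement is: with $T(t)=\min(\max(t,0),1)$ and $x_{ij}\le x_{ik}+x_{jk}$, either $T(x_{ik})=1$ or $T(x_{jk})=1$, so $T(x_{ik})+T(x_{jk})\ge 1\ge T(x_{ij})$, or else $T(x_{ik})\ge x_{ik}$ and $T(x_{jk})\ge x_{jk}$, so $T(x_{ik})+T(x_{jk})\ge\max(x_{ij},0)\ge T(x_{ij})$; with that fix your lemma matches the paper's and is shorter. (The caveat you flag about pairs with $w_{ij}=0$, where the truncation decrease is not strict, is shared with the paper's own proof, which implicitly assumes positive weights.)
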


\proof
	We will assume that at most one of $w_{ij}^+, w_{ij}^-$ is positive, so every pair of nodes is either labeled similar or dissimilar. If this were not the case, we could alter the edge weights so that this holds without changing the LP solution\footnote{Changing the edge weights would lead to a new instance of correlation clustering that has the same set of optimal clusterings, but may not be the same in terms of approximations. Because here we are concerned with optimally solving the LP relaxation, we can safely assume only one of $(w_{ij}^+, w_{ij}^-)$ is positive.}.
	
	We equivalently consider an unsigned graph $G' = (V,E)$ with the same node set $V$ and an adjacency matrix $\mA = (A_{ij})$ where $A_{ij} = 1$ if $w_{ij}^- = 0$ and $A_{ij}= 0$ otherwise. If $w_{ij} = \max \{ w_{ij}^+, w_{ij}^-\}$, the correlation clustering LP can then be written
	\begin{equation}
	\begin{array}{lll} \text{minimize}  &\sum_{i < j} w_{ij} \left( A_{ij}x_{ij} + (1-A_{ij}) (1-x_{ij}) \right)&\\ \subjectto  & x_{ij} \leq x_{ik} + x_{jk} & \text{ for all $i,j,k$} \\ & 0 \leq x_{ij} \leq 1 & \text{ for all $i,j$.} \end{array}
	\label{eq:cc2}
	\end{equation}
	In order to see the equivalence between the correlation clustering LP relaxation and the metric nearness problem, we
	define a dissimilarity matrix $\textbf{D} = (d_{ij})$ by setting $d_{ij} = 1- A_{ij}$. Notice that because $d_{ij} \in
	\{0,1\}$ and $x_{ij} \in [0,1]$, the key factor in the objective can be simplified thus:
	\[ (1-d_{ij}) x_{ij} + d_{ij}(1-x_{ij}) = | x_{ij} - d_{ij}|\,,\]
	and the LP relaxation of correlation clustering shown in~\eqref{eq:cc2} is equivalent to
	\begin{equation}
	\begin{array}{lll} \text{minimize}  & \sum_{i<j} w_{ij} |x_{ij} - d_{ij} | &\\ \subjectto  & x_{ij} \leq x_{ik} + x_{jk} & \text{ for all $i,j,k$} \\ & 0 \leq x_{ij} \leq 1 & \text{ for all $i,j$.} \end{array}
	\label{eq:cc3}
	\end{equation}
		The only difference between this objective and $\ell_1$ metric nearness is that we have included explicit bounds on the variables $x_{ij}$. To finish the proof we note that even without the constraint family ``$0 \leq x_{ij} \leq 1$ for all pairs $(i,j)$'',
		every optimal solution to problem~\eqref{eq:cc3} in fact satisfies those constraints. The proof of this fact is somewhat tedious; we give details in Appendix~\ref{app1}.
\endproof

\subsection{Sparsest Cut}
The \textbf{sparsest cut} score of a set $S \subset V$ in an $n$-node graph $G = (V,E)$ is defined to be
\begin{equation*}
\phi(S) = \frac{\cut(S)}{|S|} + \frac{\cut(S)}{|\bar{S}|} = \frac{n \cut(S)}{|S| |\bar{S}|},
\end{equation*}
where $\bar{S} = V\backslash S$ is the complement of $S$ and $\cut(S)$ indicates the number of edges crossing between $S$ and $\bar{S}$. Leighton and Rao developed an $O(\log n)$-approximation for finding the minimum sparsest cut set  $\phi^* = \min_{S\subset V} \phi(S)$ for any graph by solving a maximum multicommodity flow problem~\cite{leighton1999multicommodity}. This result is equivalent to solving the LP relaxation for the following metric-constrained linear program:
\begin{equation}
\label{sclp}
\begin{array}{lll} \text{minimize} & \sum_{(i,j)\in E} x_{ij}\\ \subjectto & \sum_{i<j} x_{ij} = n & \\ &x_{ij} \leq x_{ik} + x_{jk} &  \text{ for all $i,j,k$} \\ & x_{ij} \geq 0 &\text{ for all $i,j$} \end{array}
\end{equation}
and rounding the solution into a cut. The Leighton-Rao $O(\log n)$ approximation for sparsest cut was for many years the best approximation for this problem until Arora et al.\ developed an $O(\sqrt{\log(n)})$ approximation based on an SDP relaxation~\cite{arora2009expander}.


\subsection{Maximum Modularity Clustering}

Maximum modularity clustering~\cite{newman2004modularity,newman2006finding} takes a graph $G = (V,E)$ as input and seeks to optimize the following objective score over all clusterings $\mathcal{C}$:
\begin{equation}
\label{mod}
\max \,\, \frac{1}{2|E|} \sum_{i,j} \left(A_{ij} - \frac{d_id_j}{2|E|} \right)\delta^\mathcal{C}_{ij}
\end{equation}
where $d_i$ is the degree of node $i$, and $A_{ij}$ is the $\{0,1\}$ indicator for whether $i,j$ are adjacent in $G$. The $\delta^\mathcal{C}_{ij}$ variables encode the clustering:
\[ \delta^\mathcal{C}_{ij} = \begin{cases} 1 & \text{ if $i,j$ are together in $\mathcal{C}$} \\ 0 & \text{ otherwise}. \end{cases} \]
Although modularity has been widely used in clustering applications, Dinh showed that it is NP-hard to obtain a constant-factor approximation algorithm for the objective~\cite{dinh2016network}. However, inspired by the LP rounding algorithm of Charikar et al.\ for correlation clustering~\cite{charikar2005clustering}, Agarwal and Kempe noted that by replacing $\delta^\mathcal{C}_{ij} = 1 - x_{ij}$ in~\eqref{mod} and introducing metric constraints, one obtains a the following LP relaxation~\cite{Agarwal2008metricmod}:
\begin{equation}
\label{modlp}
\begin{array}{lll} \text{maximize} & \frac{1}{2|E|} \sum_{i,j} \left(A_{ij} - \frac{d_id_j}{2|E|} \right) (1-x_{ij})\\ \subjectto &x_{ij} \leq x_{ik} + x_{jk} & \text{ for all $i,j,k$} \\& 0 \leq x_{ij} \leq 1 &\text{ for all $i,j$.} \end{array}
\end{equation}
Solving this LP relaxation provides a useful upper bound on the maximum modularity. In practice this opens up the possibility of obtaining a posteriori approximation guarantees for using heuristic methods for modularity clustering.

\subsection{Cluster Deletion}
Cluster deletion is the problem of deleting a minimum number of edges in an unweighted, undirected graph $G = (V,E)$ so that the remaining graph is a disjoint set of cliques. This problem can be viewed as a variant of correlation clustering in which each pair of nodes has weights $(w_{ij}^+ = 1, w_{ij}^- = 0)$ or $(w_{ij}^+ = 0, w_{ij}^- = \infty)$. The LP relaxation can be obtained by starting with the relaxation for correlation clustering and fixing $x_{ij} =1$ for $(i,j) \notin E$. This eliminates the need to work explicitly with variables for non-edges, and therefore simplifies into the following relaxation:
\begin{equation}
\label{cdlp}
\begin{array}{lll} \text{minimize} & \sum_{(i,j)\in E} x_{ij}\\ \subjectto &x_{ij} \leq x_{ik} + x_{jk} & \text{ if $(i,j,k) \in T$} \\ &1 \leq x_{ik} + x_{jk} & \text{ if $(i,j,k) \in \tilde{T}_k$} \\& 0 \leq x_{ij} \leq 1 &\text{ for all $(i,j) \in E$}. \end{array}
\end{equation}
In the above, $T$ represents the set of triangles, i.e. triplets of nodes $i,j,k$ that form a clique in $G$. The set $\tilde{T}_k$ represents \emph{bad} triangles ``centered" at $k$, i.e. $k$ shares an edge with $i$ and $j$, but $(i,j) \notin E$. In recent work we showed that the solution to this LP can be rounded into a clustering that is within a factor two of the optimal cluster deletion solution~\cite{veldt2017lamcc}.


\subsection{Maximum Cut}
Given a graph $G = (V,E)$, the maximum cut problem seeks to partition $G$ into two clusters in a way that maximizes the number of edges crossing between the clusters. The linear programming relaxation for Max Cut is
\begin{equation}
\label{maxcut}
\begin{array}{lll} \text{maximize} & \sum_{i,j} A_{ij} x_{ij}\\ \subjectto &x_{ij} \leq x_{ik} + x_{jk} & \text{ for all $i,j,k$} \\&x_{ij} + x_{ik} +x_{jk} \leq 2  & \text{ for all $i,j,k$} \\ &0 \leq x_{ij} \leq 1 &\text{ for all $i,j$.} \end{array}
\end{equation}
Note that if the linear constraints $x_{ij} \in [0,1]$ were replaced with binary constraints $x_{ij} \in \{0,1\}$, then this would correspond to an integer linear program for the exact Max Cut objective. The constraint $x_{ij} + x_{ik} +x_{jk} \leq 2$ are included to ensure in the binary case that nodes are assigned to at most two different clusters. It is well known that the integrality gap of this linear program is $2-\epsilon$~\cite{poljak1995maximum}. Fernandez de la Vega and Mathieu noted that this can be improved to a $1+\epsilon$ integrality gap in the case of dense graphs when additional constraints are added~\cite{delaVega2007maxcut}.

\section{Projection Methods for Quadratic Programming}
\label{algorithm}
The graph clustering problems we have considered above can be relaxed to obtain an LP of the form
\begin{equation}
\min_{\vx} \,\, \vc^T \vx \hspace{.2cm} \text{ s.t. }  \mA \vx \leq \vb,
\label{lp}
\end{equation}
where $\mA$ is a large and very sparse matrix with up to $O(n^3)$ rows and $O(n^2)$ columns. Standard optimization software will be unable to solve these LPs for large values of $n$, due to memory constraints, so we instead turn our attention to applying a simple projection method for solving a closely related quadratic program:
\begin{equation}
\min_{\vx} \,\, Q(\vx) = \vc^T \vx + \frac{1}{2\gamma} \vx^T \mW \vx \hspace{.2cm} \text{ s.t. }  \mA \vx \leq \vb,
\label{qp}
\end{equation}
where $\mW$ is a diagonal matrix with positive diagonal entries and $\gamma > 0$.  When $\mW$ is the identity matrix, it is well known that there exists some $\gamma_0 > 0$ such that for all $\gamma > \gamma_0$, the optimal solution to the quadratic program corresponds to the minimum 2-norm solution of the original LP~\cite{mangasarian1984normal}. 

\subsection{Applying Dykstra's Projection Method}
The dual of~\eqref{qp} is another quadratic program:
\begin{align}
\max_{\vy} \,\, D(\vy) = -\vb^T \vy - &\frac{1}{2\gamma}({\textbf{A}^T \vy + \vc })^T\mW^{-1}({\textbf{A}^T \vy + \vc }) \hspace{.2cm} \text{ s.t. }  \vy \geq 0. \label{dual1}
\end{align}
The core of our algorithm for solving~\eqref{qp} is Dykstra's projection method~\cite{dykstra1983algorithm}, which iteratively updates a set of primal and dual variables $\vx$ and $\vy$ that are guaranteed to converge to the optimal solution of~\eqref{qp} and~\eqref{dual1} respectively. 
The method cyclically visits constraints one at a time, first performing a \emph{correction} step to the vector $\vx$ based on the dual variable associated with the constraint, and then performing a \emph{projection} step so that $\vx$ satisfies the linear constraint in question. Pseudocode for the method, specifically when applied to our quadratic program, is given in Algorithm~\ref{dykstra}. For quadratic programming, Dykstra's method is also equivalent to Hildreth's projection method~\cite{hildreth1957quadratic}, and is guaranteed to have a linear convergence rate~\cite{escalante2011altproj}. In Appendix~\ref{app2}, we provide more extensive background information regarding Dykstra's method. In particular we prove the equivalence relationship between Dykstra's method and Hildreth's method in the case of quadratic programming. We also provide a slight generalization of the results of Dax~\cite{simplealg} which show that the dual variables produced by Dykstra's method allow us to obtain a strictly increasing lower bound on the quadratic objective~\eqref{qp} we are trying to solve.

\begin{algorithm}[tb]
	\caption{Dykstra's Method for Quadratic Programming}
	\begin{algorithmic}[5]
		\State \textbf{Input:} $\mA  \in \mathbb{R}^{N\times M}, \vb \in \mathbb{R}^M, \vc \in \mathbb{R}^N, \gamma > 0, \mW \in \mathbb{R}^{N\times N} (\text{diagonal, positive definite})$ 
		\State \textbf{Output:} $\hat{\vx} = \argmin_{\vx \in \mathcal{A}} Q(\vx)$ where $\mathcal{A} = \{ \vx \in \mathbb{R}^N: \mA \vx \leq \vb \}$ 
		\State $\vy := \textbf{0} \in \mathbb{R}^M$ 
		\State $\vx := -\gamma\mW^{-1}\vc$, $k := 0$
		\While{\emph{not converged}}
		\State $k := k+1$
		\State (Visit constraints cyclically): $i := (k-1) \bmod M + 1$ 
		\State (Perform correction step):  $\vx := \vx + y_i (\gamma\mW^{-1} \va_i)$ 
		where $\va_i$ is the $i$th row of $\mA$
		\State (Perform projection step): $\vx := \vx - \theta_i^+ (\gamma\mW^{-1} \va_i)$
		where $\theta_i^+ = \frac{ \max \{\va_i^T \vx - b_i, 0 \}}{\gamma \va_i^T \mW^{-1} \va_i }$
		\State (Update dual variables): $y_i := \theta_i^+ \geq 0$
		\EndWhile
	\end{algorithmic}
	\label{dykstra}
\end{algorithm}

\section{Implementing Dykstra's Method for Metric-Constrained Optimization}
\label{details}
Our full algorithmic approach takes Dykstra's method (Algorithm~\ref{dykstra}) and includes a number of key features that allow us to efficiently obtain high-quality solutions to metric-constrained problems in practice. The first feature, a procedure for locally performing projections at metric constraints, is the key insight which led Dhillon et al.\ to develop efficient algorithms for metric nearness~\cite{dhillon2004tfa}. In addition, we detail a sparse storage scheme for dual vectors, and include a more robust convergence check that leads to better constraint satisfaction and stronger optimality guarantees for a variety of metric-constrained problems.

In order to demonstrate our application of Dykstra's method to metric-constrained linear programs, we will specifically consider the quadratic program related to the Leighton-Rao sparsest cut relaxation:
\begin{equation}
\label{sclp2}
\begin{array}{lll} \text{minimize} & \sum_{(i,j)\in E} x_{ij} + \frac{1}{2\gamma}\sum_{i<j} w_{ij} x_{ij}^2 \\ \subjectto & \sum_{i<j} x_{ij} = n & \\ &x_{ij} \leq x_{ik} + x_{jk} &  \text{ for all $i,j,k$} \\ & x_{ij} \geq 0 &\text{ for all $i,j$} \end{array}
\end{equation}
where $w_{ij} = 1$ if $(i,j) \in E$ and $w_{ij} = \lambda$ for some $\lambda \in (0,1)$ otherwise. We give justification for this choice of weights matrix in Section~\ref{sec:guarantees}. To slightly simplify expressions later in this section we will the parameter $\gamma$ directly into a new weight matrix $\mW_\gamma = \mW/\gamma$. Initially the vector of dual variables $\vy$ is set to zero, and $\vx = -\mW_\gamma^{-1} \vc$. For the sparsest cut relaxation in particular this means we set $\vx = (x_{ij})$ as follows:
\[ x_{ij}=  \begin{cases} -\gamma & \text{if $(i,j) \in E$ }\\
0 & \text{ otherwise}.
\end{cases}\]

\subsection{Efficient local updates}
Projections of the form $\vx := \vx + \alpha \mW^{-1} \va_i$
for $\mW$ diagonal and a constant~$\alpha$ will change~$\vx$ by at most the number of nonzero entries of $\va_i$, the $i$th row of constraint matrix $\mA$. In the case of triangle inequality constraints, which dominate our constraint set, there are exactly three non-zero entries per constraint, so we perform each projection in a constant number of operations. 

Consider the triangle inequality constraint $x_{ij} - x_{ik} - x_{jk} \leq 0$ associated with an ordered triplet $(i,j,k)$. Let $t = t_{ijk}$ represent a unique ID corresponding to this constraint. For now we ignore the correction step of Dykstra's method, which is skipped over in the first round since the vector of dual variables is initialized to zero (i.e. $y_t = 0$). The projection step we must perform is
\[ \vx \leftarrow \vx -  \frac{ [\va_t^T \vx - b_t]^+}{\va_t^T \mW_\gamma^{-1} \va_t } \mW_\gamma^{-1} \va_t \]
where $\va_t$ contains exactly three entries: $1$, $-1$, and $-1$, at the locations corresponding to variables $x_{ij}$, $x_{ik}$, and $x_{jk}$. This projection will only change $\vx$ if constraint $t$ is violated, so we first check if
\[ \delta = \va_t^T \vx - b_t = x_{ij} - x_{ik} - x_{jk} > 0. \]
If so, we compute
\[ \theta_t^+ = \frac{ [\va_t^T \vx - b_t]^+}{\va_t^T \mW_\gamma^{-1} \va_t } = \frac{\delta}{1/w_{ij} + 1/w_{ik} + 1/w_{jk}} = \frac{\delta w_{ij}w_{ik}w_{jk}}{w_{ij}w_{ik}+w_{ij}w_{jk} + w_{ik}w_{ij}}. \]
The projection step then updates exactly three entries of $\vx$:
\[ x_{ij} \leftarrow x_{ij} - \theta_t^+\frac{ x_{ij}}{w_{ij}}, \hspace{1cm} x_{ik} \leftarrow x_{ik} - \theta_t^+\frac{x_{ik}}{w_{ik}}, \hspace{1cm} x_{jk} \leftarrow x_{jk} - \theta_t^+  \frac{x_{jk}}{w_{jk} }. \]
Note that all of this can be done in a constant number of operations for each triangle inequality constraint.

\subsection{Sparse storage of $\vy$}
For constraint sets that include triangle inequalities for every triplet of nodes $(i,j,k)$, the dual vector $\vy$ will be of length $O(n^3)$. Observe that the correction step in Algorithm~\ref{dykstra} at constraint $t$ will be nontrivial if and only if there was a nontrivial projection last time the constraint was visited. In other words, $\theta_t^+$ was nonzero in the previous round and therefore $y_t > 0$. 

\subsubsection{Sparsity in the Triangle Constraint Variables.} Note that each triplet $(i,j,k)$ corresponds to three different metric constraints: $x_{ij} - x_{ik} - x_{jk} \leq 0$, $x_{jk} - x_{ik} - x_{ij} \leq 0$, and $x_{ik} - x_{ij} - x_{jk} \leq 0$, and in each round at most one of these constraints will be violated, indicating that at least two dual variables will be zero. Dhillon et al.\ concluded that ${n \choose 3}$ floating point numbers must be stored in implementing Dykstra's algorithm for the metric nearness problem~\cite{dhillon2003MNreport}. We further observe, especially for the correlation clustering LP, that often in practice for a large percentage of triplets $(i,j,k)$, none of the three metric constraints is violated. Thus we can typically avoid the worst case $O(n^3)$ memory requirement by storing $\vy$ sparsely. 

\subsubsection{Storing $\vy$ in dictionaries or arrays.}
Conceptually the easiest approach to storing nonzero entries in $\vy$ is to maintain a dictionary of key-value pairs $(t,y_t)$. In this case, when visiting constraint $t$, we check if the dictionary contains a nonzero dual variable $y_t > 0$ for this constraint, and if so we perform the corresponding non-trivial correction step. However, because we always visit constraints in the same order, we find it faster in practice to store two arrays with pairs $(t,y_t)$ rather than a dictionary. The first array stores entries $y_t$ that were set to a nonzero value in the previous pass through the constraints. We maintain a pointer to the entry in the array which gives us the next such constraint $t$ that will require a nonzero correction in the current pass through the constraint set. The second array allocates space for the new dual variables that become nonzero after a projection step in the {current} pass through the constraints.  These will be needed for corrections in the next round. Dykstra's method does not require we remember history beyond the last pass through constraints, so we never need more than two arrays storing pairs $(t,y_t)$.

\subsubsection{Pseudocode}
Algorithm~\ref{metproj} displays pseudocode for one step of our implementation of Dykstra's method when visiting a metric constraint. We assume the nonzero dual variables $y_t$ are stored sparsely and can be efficiently queried and updated. The same basic outline applies also to non-metric constraints.

\begin{algorithm}[tb]
	\caption{MetricProjection$(i,j,k)$}
	\begin{algorithmic}[5]
		\State $t := \text{unique ID for $(i,j,k)$}$
		\State Obtain $(x_{ij},x_{ik}, x_{jk})$ and weights $(w_{ij},w_{ik}, w_{jk})$ from $\mX$ and $\mW_\gamma$
		\If{$y_t > 0$}
		\State $x_{ij} \leftarrow x_{ij} + y_t\frac{ x_{ij}}{w_{ij}}, \hspace{.1cm} x_{ik} \leftarrow x_{ik} + y_t\frac{x_{ik}}{w_{ik}}, \hspace{.1cm} x_{jk} \leftarrow x_{jk} + y_t  \frac{x_{jk}}{w_{jk} }$.
		\EndIf
		\State $\delta := x_{ij} - x_{ik} - x_{jk}$
		\If{$\delta > 0$}
		\State $\theta_t = \frac{\delta w_{ij}w_{ik}w_{jk}}{w_{ij}w_{ik}+w_{ij}w_{jk} + w_{ik}w_{ij}}$
		\State $x_{ij} \leftarrow x_{ij} - \theta_t\frac{ x_{ij}}{w_{ij}}, \hspace{.1cm} x_{ik} \leftarrow x_{ik} - \theta_t\frac{x_{ik}}{w_{ik}}, \hspace{.1cm} x_{jk} \leftarrow x_{jk} - \theta_t  \frac{x_{jk}}{w_{jk} }$.
		\State Store $y_t = \theta_t$
		\EndIf
	\end{algorithmic}
	\label{metproj}
\end{algorithm}

\subsection{Robust Stopping Criteria}
Many implementations of Dykstra's method stop when the change in vector $\vx$ drops below a certain tolerance after one or more passes through the entire constraint set. However, Birgin et al. noted that in some cases this may occur even when the iterates are far from convergence~\cite{birgin2005robust}. Because we are applying Dykstra's method specifically to quadratic programming, we can obtain a much more robust stopping criterion by carefully monitoring the dual objective function and dual variables, in a manner similar to the approach of Dax~\cite{simplealg}. 

\subsubsection{Optimality Conditions} Let $(\vy_k, \vx_k)$ denote the pair of primal and dual vectors computed by Dykstra's method after $k$ projections. We know that these vectors will converge to an optimal pair $(\hat{\vx},\hat{\vy})$ such that $D(\hat{\vy}) = \hat{Q} = Q(\hat{\vx})$ where $\hat{Q}$ is the optimal objective for both the primal~\eqref{qp} and dual~\eqref{dual1} quadratic programs. The KKT optimality conditions for quadratic programming state that the pair $(\hat{\vx}, \hat{\vy})$ is optimal for the primal~\eqref{qp} and dual~\eqref{dual1} quadratic programs if and only if the following conditions hold:
\[ \textbf{1. $\mA \hat{\vx} \leq \vb$ } \hspace{1cm} \textbf{2. $\hat{\vy}^T (\mA \hat{\vx}-\vb) = 0$ } \hspace{1cm} \textbf{3. ${\mW_\gamma}\hat{ \vx} = -\textbf{A}^T \hat{\vy} -\vc$. } \hspace{1cm} \textbf{4. $\hat{\vy} \geq 0$}  \]
In this case we know that $D(\hat{\vy}) = \hat{Q} = Q(\hat{\vx})$ where $\hat{Q}$ is the optimal objective for both the primal and dual quadratic programs. We show iin Appendix~\ref{app2} that the dual update step $y_i := \theta_i^+$ in Algorithm~\ref{dykstra} will guarantee that the last two KKT conditions are always satisfied. In other words, the primal and dual variables at iteration $k$, $(\vx_k ,\vy_k)$, satisfy $\vy_k \geq 0$ and ${\mW_\gamma} \vx_k = -\textbf{A}^T \vy_k -\vc$. This means that $\vy_k$ is always feasible for the dual objective~\eqref{dual1}, and by weak duality we have the following lower bound on the optimal solution to objective~\eqref{qp}
\begin{equation}
\label{eq:dbound}
D(\vy_k)= -\vb^T \vy_k- \frac{1}{2\gamma}(\textbf{A}^T \vy_k + \vc )^T\mW^{-1}_\gamma(\textbf{A}^T \vy_k + \vc ) = -\vb^T\vy_k - \frac{1}{2} \vx_k^T \mW_\gamma \vx_k.
\end{equation}
We also prove in the Appendix that performing Dykstra's method is equivalent to applying a coordinate ascent procedure on the dual quadratic program~\eqref{dual1}. This means that $D(\vy_k)$ is a strictly increasing lower bound that converges to $\hat{Q}$. Meanwhile, $Q(\vx_k)$ does not necessarily upper bound $\hat{Q}$ since $\vx_k$ is not necessarily feasible. However, $\vx_k$ converges to the optimal primal solution, so as the algorithm progresses, the maximum constraint violation of~$\vx_k$ decreases to zero. In practice, once $\vx_k$ has satisfied constraints to within a small enough tolerance we treat $Q(\vx_k)$ as an upper bound. 

After each pass through the constraints we check the primal-dual gap~$\omega_k$ and maximum constraint violation~$\rho_k$, given by
\begin{align*}
\omega_k &= \frac{D(\vy_k) - Q(\vx_k)}{D(\vy_k)} \\
\rho_k &= \max_t (b_t - \va_t^T \vx_k).
\end{align*}
Together these two scores provide an indication for how close $(\vx_k, \vy_k)$ are to convergence.

\subsubsection{Computing $\omega_k$ and $\rho_k$}
To compute $\omega_k$ in practice, we note that $\frac{1}{2} \vx_k^T \mW_\gamma \vx_k$ appears in both $Q(\vx_k)$ and $D(\vy_k)$ (see~\eqref{eq:dbound}). This term, as well as the term $\vc^T \vx$ can be easily computed by iterating through the $O(n^2)$ entries of $\vx$. Finding $\vb^T \vy_k$ could theoretically involve $O(n^3)$ computations, but this can be done during the main loop of Dykstra's algorithm by continually updating a variable that adds up terms of the form $y_tb_t$ whenever $y_t$ and $b_t$ are both nonzero for a particular constraint $t$. Note that in most of the problems we have considered here, $b_t = 0$ for the majority of the constraints. For example, in the sparsest cut relaxation~\eqref{sclp2}, $b_t$ is only nonzero for the constraint $\sum_{i<j} x_{ij} = n$.

Computing $\rho_k$ requires we iterate though the entire constraint set and simply record the worst constraint violation. Since this requires visiting $O(n^3)$ constraints, it may take nearly as long as a full pass through constraints using Dykstra's method. In practice we therefore just check each constraint until we come across one that violates the desired constraint tolerance, if such a constraint exists. At this point we know the algorithm did not converge, and there is no need to continue checking constraint violations. Every 10-20 passes through the algorithm we perform a full constraint check to report on the progress of the algorithm.

\subsection{Entrywise Rounding Procedure} In practice we could simply run Dykstra's iteration until both~$\omega_k$ and~$\rho_k$ fall below user-defined tolerances. We additionally incorporate another step in out convergence check that significantly improves the algorithm's performance in practice. Because~$\vx_k \rightarrow \hat{\vx}$, we know that after a certain point, the maximum entrywise difference between~$\hat{\vx}$ and~$\vx_k$ will be arbitrarily small. Therefore, once both $\rho_k$ and $|\omega_k|$ have dropped below a given tolerance, we will test for convergence by rounding every entry of $\vx_k$ to $r$ significant figures for a range of values of $r$: $\vx_r = round(\vx_k,r)$. As long as $\vx_k$ is close enough to optimality and we have chosen the proper $r$, $\vx_r$ will satisfy constraints to within the desired tolerance and will have an objective exactly or nearly equal to the best lower bound we have for $\hat{Q}$: $[D(\vy_k) - Q(\vx_r)]/D(\vy_k) \leq \epsilon$. If $\vx_r$ does not satisfy constraints or has a poor objective score, we simply discard $\vx_r$ and continue with $\vx_k$ and the original Dykstra iteration. Even if this rounding procedure is always unsuccessful, we simply fall back on the iterates $(\vx_k,\vy_k)$ until $~\omega_k$ and $\rho_k$ eventually fall below the defined tolerance. In practice however, we do find that the rounding procedure dramatically improves both the runtime of our method as well as constraint satisfaction.

We highlight the fact that when checking whether we are close enough to convergence to apply the entrywise rounding step, we consider the absolute value of $\omega_k$ and not $\omega_k$ itself. Recall that this value may be negative if $Q(\vx_k)$ is not an upper bound on the optimal objective. Often in practice we find that by the time we are close to convergence, $Q(\vx_k)$ is indeed an upper bound and $\omega_k$ is a small positive number. However, we also observe cases where $\vx_k$ is infeasible and $\omega_k$ is negative, but $|\omega_k|$ is small and our entrywise rounding procedure succeeds in producing a feasible point $\vx_r$. When this happens, the duality gap between $D(\vy_k)$ and $Q(\vx_r)$ is guaranteed to be non-negative, and tells us how close the feasible vector $\vx_r$ is to the optimal solution. 
\section{Approximation Guarantees for Clustering Objectives}
\label{sec:guarantees}
\label{approx}

The results of Mangasarian confirm that for all~$\gamma$ greater than some $\gamma_0 > 0$, the original linear
program~\eqref{lp} and the quadratic regularization~\eqref{qp} will have the same optimal
solution~\cite{mangasarian1984normal}. However, it is challenging to compute~$\gamma_0$ in practice, and if we
set~$\gamma$ to be too high then this may lead to very slow convergence for solving QP~\eqref{qp} using projection
methods. Dhillon et al.~suggest ways to set~$\gamma$ for variants of the metric nearness problem based on empirical
observations, but no approximation guarantees are provided~\cite{dhillon2004tfa}. A key contribution in our work is a
set of results, outlined in this section, which show how to set~$\mW$ and~$\gamma$ in order to obtain specific
guarantees for approximating the correlation clustering and sparsest cut objectives. These results
hold for all $\gamma > 0$, whether larger or smaller than the unknown value~$\gamma_0$. We begin with a general theorem that provides a useful strategy for obtaining approximation guarantees for a large class of linear programs.
\begin{theorem}
	\label{genthm}
	Let $\mA \in \mathbb{R}^{M\times N}$, $\vb \in \mathbb{R}^N$, $\vc \in \mathbb{R}^N_{> 0}$, and $\mathcal{A} = \{\vx \in \mathbb{R}^N : \mA \vx \leq \vb \}$. Denote $\vx^* = \argmin_{\vx \in \mathcal{A}} \vc^T \vx$ and assume that all entries of $\vx^*$ are between $0$ and $B > 0$. Let $\mW$ be a diagonal matrix with entries $\mW_{ii} = c_i > 0$ and let $\hat{\vx} =  \argmin_{\vx \in \mathcal{A}} [\vc^T \vx + 1/(2\gamma)\vx^T \mW \vx]$. Then
	\[ \vc^T \xlp \leq \vc^T \xqp \leq \vc^T \xlp ( 1+ B/(2\gamma) ). \]
\end{theorem}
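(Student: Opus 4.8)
The plan is to exploit two elementary facts: $\hat{\vx}$ is feasible for the linear program, which gives the left-hand inequality for free, and $\vx^*$ is feasible for the quadratic program, which drives the right-hand inequality via a one-line convexity comparison. Write $Q(\vx) = \vc^T\vx + \tfrac{1}{2\gamma}\vx^T\mW\vx$ for the regularized objective, so that $\vx^* = \argmin_{\vx \in \mathcal{A}}\vc^T\vx$ and $\hat{\vx} = \argmin_{\vx \in \mathcal{A}} Q(\vx)$; note $\hat{\vx}$ is well defined since $Q$ is strictly convex (as $\mW$ is positive definite) on the nonempty polyhedron $\mathcal{A}$.

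First, since $\hat{\vx} \in \mathcal{A}$ and $\vx^*$ is a minimizer of $\vc^T\vx$ over $\mathcal{A}$, we immediately get $\vc^T\vx^* \le \vc^T\hat{\vx}$, which is the left inequality.

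For the upper bound, compare $Q$ at the two points. Because $\hat{\vx}$ minimizes $Q$ over $\mathcal{A}$ and $\vx^* \in \mathcal{A}$, we have $Q(\hat{\vx}) \le Q(\vx^*)$, i.e.
\[ \vc^T\hat{\vx} + \tfrac{1}{2\gamma}\hat{\vx}^T\mW\hat{\vx} \le \vc^T\vx^* + \tfrac{1}{2\gamma}(\vx^*)^T\mW\vx^*. \]
Since $\mW$ is positive definite, $\hat{\vx}^T\mW\hat{\vx} \ge 0$, so it may be dropped from the left side, leaving $\vc^T\hat{\vx} \le \vc^T\vx^* + \tfrac{1}{2\gamma}(\vx^*)^T\mW\vx^*$. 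The only remaining step, and the only one that uses the hypotheses on $\mW$ and the box bound, is to control $(\vx^*)^T\mW\vx^* = \sum_i c_i (x_i^*)^2$: from $0 \le x_i^* \le B$ we get the entrywise bound $(x_i^*)^2 \le B x_i^*$, hence $\sum_i c_i (x_i^*)^2 \le B\sum_i c_i x_i^* = B\,\vc^T\vx^*$, where we used $\mW_{ii} = c_i$ being exactly the $i$th entry of $\vc$. Substituting gives $\vc^T\hat{\vx} \le \vc^T\vx^*\bigl(1 + B/(2\gamma)\bigr)$.

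There is no real obstacle here: the argument is just the convexity comparison $Q(\hat{\vx}) \le Q(\vx^*)$ together with the elementary inequality $(x_i^*)^2 \le B x_i^*$. The one conceptual point worth emphasizing is that the choice $\mW = \operatorname{diag}(\vc)$ is precisely what converts the additive quadratic overhead into a clean multiplicative factor $1 + B/(2\gamma)$ on the LP optimum, which is why this particular weighting is later used for the sparsest cut and correlation clustering relaxations.
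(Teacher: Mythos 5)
Your proof is correct and follows essentially the same route as the paper: compare the regularized objective at $\hat{\vx}$ and $\vx^*$, drop the nonnegative quadratic term at $\hat{\vx}$, and bound $(\vx^*)^T\mW\vx^* \leq B\,\vc^T\vx^*$ via the entrywise inequality $(x_i^*)^2 \leq B x_i^*$ together with $\mW_{ii} = c_i$. No gaps to report.
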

\begin{proof}
	Vectors $\hat{\vx}$ and $\vx^*$ are optimal for their respective problems, meaning that
	\[ \vc^T\vx^* \leq \vc^T\hat{\vx} \leq \vc^T\hat{\vx} + \frac{1}{2\gamma} \hat{\vx}^T \mW \hat{\vx} \leq \vc^T\vx^* + \frac{1}{2\gamma} ( \vx^*)^T \mW \vx^*. \]
	The proof follows from combining these inequalities with a bound on the second term on the far right. By our construction of $\mW$ and the bounds we assume hold for $\vx^*$, we have:
	\[ (\vx^*)^T \mW \vx^* = \sum_{i=1}^n c_i (\vx_i^*)^2 = B^2 \sum_{i=1}^n c_i (\vx_i^*/B)^2 \leq B \sum_{i=1}^n c_i \vx_i^* = B \vc^T \vx^*\]
	where the second to last step holds because $0 \leq x_i^* \leq B \implies (x_i^*/B)^2 < (x_i^*/B)$.
\end{proof}

\subsection{Cluster Deletion Approximation} We observe that Theorem~\ref{genthm} directly implies a result for the cluster deletion LP relaxation~\eqref{cdlp}. Cluster deletion has a variable $x_{ij}$ for each edge $(i,j) \in E$. The objective can be written $\ve^T \vx = \sum_{(i,j) \in E} x_{ij}$, where $\ve$ is the all ones vector. Since the LP also includes constraints $x_{ij} \in [0,1]$, we see that the assumptions of Theorem~\ref{genthm} hold with $\mW$ equal to the identity matrix and $B =1$. This means that we can use our Dykstra-based projection method to optimize a quadratic objective that produces a solution within a factor $(1+ 1/(2\gamma))$ of the optimal cluster deletion LP relaxation. Coupling this result with the LP rounding procedure we developed in previous work, we can obtain a $(2 + 1/ \gamma)$ approximation for cluster deletion in practice~\cite{veldt2017lamcc}.
 
\subsection{Correlation Clustering}

Consider a correlation clustering problem on $n$ nodes where each pair of nodes $(i,j)$ is either strictly similar or
strictly dissimilar, with a nonzero weight $w_{ij} > 0$. That is, exactly one of the weights $(w_{ij}^-, w_{ij}^+)$ is positive and the other is zero.
We focus on the LP relaxation for this problem given in the form of the $\ell_1$ metric nearness LP~\eqref{mn1}. We slightly alter this formulation by performing a change of variables $y_{ij} = x_{ij} - d_{ij}$. The LP can then be written equivalently as:
\begin{equation}
\label{mncc}
\begin{array}{lll} \text{minimize} & \sum_{i < j} w_{ij} m_{ij} & \\ \subjectto  &  y_{ij} - y_{ik} - y_{jk} \leq b_{ijk}  & \text{ for all $i,j,k$} \\ & y_{ij} \leq m_{ij} & \text{ for all $i,j$} \\ & -y_{ij} \leq m_{ij} &\text{ for all $i,j$} \end{array}
\end{equation}
where $b_{ijk} = -d_{ij} + d_{ik} + d_{jk}$ is defined so that the implicit variables $x_{ij} = y_{ij} + d_{ij}$ satisfy triangle inequalities. To write this LP in the format of~\eqref{lp}, we as usual use $\vx$ to represent the set of variables of the linear program. However, for this problem we must take care to note that $\vx$ does not represent a linearization of the $x_{ij}$ distance variables, but instead stores both $y_{ij}$ and $m_{ij}$ variables. More precisely, to relate~\eqref{mncc} to the format of LP~\eqref{lp}, we set $\vx = \begin{bmatrix} \vy &  \vm \end{bmatrix}^T$ and $\vc = \begin{bmatrix} \textbf{0} & \vw \end{bmatrix}^T$, where $\vy, \vm$ represent linearizations of the doubly-indexed $(y_{ij})$ and $(m_{ij})$ variables, and $\vw = (w_{ij})$ is the vector of positive weights for the node pairs. 
Rather than minimizing $\vc^T \vx = \sum_{i<j} w_{ij} m_{ij}$ we have a method that can minimize the quadratic objective $\vc^T \vx + \frac{1}{2\gamma} \vx^T \mW \vx$ over the same constraint set. We construct a weight matrix that contains two copies of the weight vector $\vw$, one to match up with the $\vy$ vector and one corresponding to the $\vm$ vector:
\begin{equation}
\label{ccW}
\mW = \begin{bmatrix} diag(\vw) & \textbf{0} \\ \textbf{0} & diag(\vw) \end{bmatrix}.
\end{equation}
The quadratic regularization of the original LP objective is then
\begin{equation}
\label{ccqp}
\min_{\vx} \vc^T \vx + \frac{1}{2\gamma} \vx^T \mW \vx= \min_{(m_{ij}), (y_{ij})} \,\, \sum_{i<j} w_{ij} m_{ij} +\frac{1}{2\gamma} \sum_{i<j} w_{ij} m_{ij}^2 + \frac{1}{2\gamma} \sum_{i<j} w_{ij} y_{ij}^2.
\end{equation}
Observe that for the original LP~\eqref{mncc} as well as the regularized objective~\eqref{ccqp}, the variables satisfy $m_{ij} = |y_{ij}|$ at optimality. This implies that $m_{ij}^2 = y_{ij}^2$, which is the reason we choose to introduce variables $y_{ij} = x_{ij} - d_{ij}$ rather than working directly with $x_{ij}$. Introducing $y_{ij}$ variables allows us to replace $y_{ij}^2$ with $m_{ij}^2$ in~\eqref{ccqp}, and re-write the objective using terms only involving $m_{ij}$ variables:
\begin{equation}
\label{ccqp2}
\min_{(m_{ij}), (y_{ij})}    \,\, \sum_{i<j} w_{ij} m_{ij} +\frac{1}{\gamma} \sum_{i<j} w_{ij} m_{ij}^2. 
\end{equation}
Let $(m^*_{ij})$ and $(y^*_{ij})$ be optimal for~\eqref{mncc} and $(\hat{m}_{ij}), (\hat{y}_{ij})$ be optimal for~\eqref{ccqp2}. Then
\begin{equation}
\sum_{i<j} w_{ij} \hat{m}_{ij} +\frac{1}{\gamma} \sum_{i<j} w_{ij} \hat{m}_{ij}^2 \leq \sum_{i<j} w_{ij} m^*_{ij} +\frac{1}{\gamma} \sum_{i<j} w_{ij} (m^*_{ij})^2 \leq \left(1 + \frac{1}{\gamma}  \right) \sum_{i<j} w_{ij} {m}^*_{ij}.
\end{equation}
In the last step above we have used the fact that $m_{ij}^* = | y_{ij}^*| \leq 1 \implies m_{ij}^* \leq (m_{ij}^*)^2$ (see the proof of Theorem~\ref{thm:metric-cc} and Lemma~\ref{lemma1} in the Appendix for why $|y_{ij}^*| \leq 1$). This proves an approximation result for correlation clustering:
\begin{theorem}
	Let $(m^*_{ij})$ and $(y^*_{ij})$ be the optimal solution vectors for the correlation clustering LP relaxation given in~\eqref{mncc} and $(\hat{m}_{ij}), (\hat{y}_{ij})$ be the optimal solution to the related QP~\eqref{ccqp}. Then
	\[\sum_{i<j} w_{ij} m^*_{ij}  \leq \sum_{i<j} w_{ij} \hat{m}_{ij} \leq \left(1 + \frac{1}{\gamma}  \right) \sum_{i<j} w_{ij} {m}^*_{ij}. \]
\end{theorem}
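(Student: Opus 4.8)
The plan is to follow the chain of inequalities set up immediately before the statement, exploiting that both the LP~\eqref{mncc} and the QP~\eqref{ccqp} are minimization problems over the \emph{same} feasible polyhedron, together with the change of variables $y_{ij} = x_{ij} - d_{ij}$ already in place. First I would record the structural observation that at any optimal point of either problem $m_{ij} = |y_{ij}|$: the constraints $y_{ij} \le m_{ij}$ and $-y_{ij} \le m_{ij}$ force $m_{ij} \ge |y_{ij}|$, and since each objective is strictly increasing in every $m_{ij}$ (because $w_{ij} > 0$), optimality saturates this to $m_{ij} = |y_{ij}|$, hence $m_{ij}^2 = y_{ij}^2$. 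Substituting $y_{ij}^2 = m_{ij}^2$ into~\eqref{ccqp} merges its two quadratic sums into one, producing the equivalent reformulation~\eqref{ccqp2}, whose minimizer is exactly $(\hat m_{ij}), (\hat y_{ij})$.

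For the left-hand inequality I would simply note that $(\hat m_{ij}, \hat y_{ij})$ is feasible for the LP~\eqref{mncc}, so its LP objective $\sum_{i<j} w_{ij}\hat m_{ij}$ is at least the LP optimum $\sum_{i<j} w_{ij} m^*_{ij}$. For the right-hand inequality, since $(\hat m, \hat y)$ minimizes~\eqref{ccqp2} and $(m^*, y^*)$ is feasible for it,
\[ \sum_{i<j} w_{ij}\hat m_{ij} + \frac{1}{\gamma}\sum_{i<j} w_{ij}\hat m_{ij}^2 \;\le\; \sum_{i<j} w_{ij} m^*_{ij} + \frac{1}{\gamma}\sum_{i<j} w_{ij}(m^*_{ij})^2 . \]
Discarding the nonnegative term $\frac{1}{\gamma}\sum w_{ij}\hat m_{ij}^2$ on the left and applying $0 \le m^*_{ij} \le 1 \implies (m^*_{ij})^2 \le m^*_{ij}$ on the right gives $\sum_{i<j} w_{ij}\hat m_{ij} \le \bigl(1 + 1/\gamma\bigr)\sum_{i<j} w_{ij} m^*_{ij}$, as claimed.

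The only step that is not immediate is the bound $m^*_{ij} = |y^*_{ij}| \le 1$, equivalently that every optimal solution of~\eqref{mncc} satisfies the implicit box constraint $x^*_{ij} \in [0,1]$ even though it is not imposed explicitly. This is precisely the fact deferred to Appendix~\ref{app1} in the proof of Theorem~\ref{thm:metric-cc} (isolated there as Lemma~\ref{lemma1}), so I would cite it rather than reprove it. Hence the main obstacle lives in that supporting lemma; the present argument is otherwise a two-line duality/feasibility comparison once the quadratic-to-quadratic reduction $m_{ij}^2 = y_{ij}^2$ has been justified.
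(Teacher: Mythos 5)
Your proposal is correct and follows essentially the same route as the paper: reduce the QP to the $m$-only form~\eqref{ccqp2} via $m_{ij}=|y_{ij}|$ at optimality, compare the QP minimizer against the feasible LP optimum, and use $(m^*_{ij})^2 \leq m^*_{ij}$ justified by the box bound $x^*_{ij}\in[0,1]$ from Lemma~\ref{lemma1}. You are in fact slightly more careful than the paper in spelling out the left-hand inequality and the direction of the squaring bound (the paper's text contains the typo $m^*_{ij} \leq (m^*_{ij})^2$), but the argument is the same.
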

Therefore, given any rounding procedure for the original LP that gives a factor $p$ approximation for a correlation clustering problem, we can instead solve the related QP using projection methods to obtain a factor $p(1+1/\gamma)$ approximation. For weighted correlation clustering, the best rounding procedures guarantee an $O(\log n)$ approximation~\cite{Demaine2003,Emanuel2003,charikar2005clustering}, so this can still be achieved even if we use a small value for $\gamma$.

\subsection{Sparsest Cut}
\label{scbound}
The Leighton-Rao linear programming relaxation for sparsest cut is presented in~\eqref{sclp}. This LP has a variable $x_{ij}$ for every pair of distinct nodes $i <j$ in some unweighted graph $G = (V,E)$. Let $\vx = (x_{ij})$ be a linearization of these distance variables, and define $\vc = (c_{ij})$ to be the adjacency indicators, i.e.
\[ c_{ij} = \begin{cases} 1 & \text{ if $(i,j) \in E$} \\
0 & \text{ otherwise}.
\end{cases}
\]
Then the objective can be written in the familiar format $\min_{\vx} \,\, \vc^T \vx$. If we assume we have chosen a weight matrix $\mW$ and a parameter $\gamma > 0$, the regularized version of~\eqref{sclp} has objective
\[ \min_{\vx} \,\, \sum_{i < j} c_{ij} x_{ij} + \frac{1}{2\gamma} \sum_{i<j} w_{ij} x_{ij}^2.  \]
As we have seen in Theorem~\ref{genthm}, it seems fitting for $c_{ij}$, the coefficients of $x_{ij}$, to match up with $w_{ij}$, the coefficients of $x_{ij}^2$. However, many of the $c_{ij}$ variables are zero, so it will not work to choose $w_{ij} = c_{ij}$, since $\mW$ needs to be positive definite in order for us to apply our projection method. Instead we introduce another parameter $\lambda \in (0,1)$ and define a set of weights $\vw = (w_{ij})$ by
\[ w_{ij} = \begin{cases} 1 & \text{ if $(i,j) \in E$} \\
\lambda & \text{ otherwise}.
\end{cases}
\]
In this way, the weight $w_{ij}$ is still positive but can be \emph{near} zero (i.e. near $c_{ij}$) when $(i,j) \notin E$. We can then prove the following approximation result:
\begin{theorem}
	\label{scthm}
	Let $G = (V,E)$ be a connected graph with $n = |V| > 4$. Let~$\phi^*$ be the minimum sparsest cut score for $G$ and assume that each side of the optimal sparsest cut partition has at least 2 nodes. Let $\gamma > 0$, $\mW = diag(\vw)$ be defined as above for a given $\lambda \in (0,1)$, and let $\mathcal{A}$ denote the set of constraints from the Leighton-Rao LP relaxation for sparsest cut. Then
	\[ \min_{\vx \in \mathcal{A}} \,\, \vc^T {\vx} + \frac{1}{2\gamma} {\vx}^T \mW{\vx} \leq \left(1 + \frac{1+ \lambda n}{2\gamma } \right) \phi^*. \]
\end{theorem}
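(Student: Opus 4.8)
The plan is to prove the bound by exhibiting one convenient point of $\mathcal{A}$ and evaluating the quadratic objective there; since the left-hand side is a minimum over $\mathcal{A}$, the value at any feasible point is an upper bound for it. The point I would use is the rescaled cut metric of an optimal sparsest-cut set $S^*$: writing $s = |S^*|$, $t = |\bar{S^*}| = n - s$, and $\alpha = n/(st)$, define $\vx^* = (x^*_{ij})$ by $x^*_{ij} = \alpha$ when $S^*$ separates $i$ and $j$, and $x^*_{ij} = 0$ otherwise. First I would check $\vx^* \in \mathcal{A}$: it is nonnegative, it is a nonnegative scalar multiple of a cut metric and so satisfies every triangle inequality, and $\sum_{i<j} x^*_{ij} = \alpha\, st = n$. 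Then $\vc^T \vx^* = \sum_{(i,j) \in E} x^*_{ij} = \alpha\, \cut(S^*) = \phi^*$, since exactly $\cut(S^*)$ of the separated pairs are edges.

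The heart of the proof is bounding the regularizer $(\vx^*)^T \mW \vx^* = \sum_{i<j} w_{ij} (x^*_{ij})^2$. Only the $st$ separated pairs contribute, each with $(x^*_{ij})^2 = \alpha^2$; among these, $m := \cut(S^*)$ are edges (weight $1$) and the other $st - m$ are non-edges (weight $\lambda$). Hence $(\vx^*)^T \mW \vx^* = \alpha^2\big(m(1-\lambda) + \lambda\, st\big) \le \alpha^2 m + \lambda\, \alpha^2 st = \alpha\, \phi^* + \lambda\, \alpha\, n$, using $\alpha\, st = n$ and $\alpha m = \phi^*$. I would then bound the two terms separately: $\alpha \le 1$ gives $\alpha\, \phi^* \le \phi^*$, and $\phi^* = \alpha m \ge \alpha$ (because $m \ge 1$) gives $\lambda\, \alpha\, n \le \lambda\, n\, \phi^*$. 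Therefore $(\vx^*)^T \mW \vx^* \le (1 + \lambda n)\phi^*$, and combining everything,
\[ \min_{\vx \in \mathcal{A}} \Big( \vc^T \vx + \frac{1}{2\gamma} \vx^T \mW \vx \Big) \;\le\; \vc^T \vx^* + \frac{1}{2\gamma} (\vx^*)^T \mW \vx^* \;\le\; \phi^* + \frac{1 + \lambda n}{2\gamma}\, \phi^*, \]
which is exactly the asserted inequality.

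The only nontrivial steps are the inequalities $\alpha \le 1$ and $m \ge 1$, and checking $\alpha \le 1$ is where the hypotheses enter. Since each side of the optimal partition has at least two nodes, $2 \le s \le n-2$, so $st = s(n-s)$, concave in $s$, is minimized over this range at an endpoint, giving $st \ge 2(n-2)$; as $n > 4$ this is at least $n$, so $\alpha = n/(st) \le 1$. The bound $m = \cut(S^*) \ge 1$ is simply connectivity of $G$, and connectivity is essential here: a disconnected graph has $\phi^* = 0$, whereas every point of $\mathcal{A}$ satisfies $\sum_{i<j} x_{ij} = n$ and therefore has strictly positive quadratic objective, so the stated inequality would be false. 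Note that nothing about the LP optimum is used — only that the particular $\vx^*$ above is feasible with objective value $\phi^*$.
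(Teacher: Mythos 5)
Your proof is correct and follows essentially the same route as the paper: you upper-bound the QP minimum by evaluating the objective at the same feasible point (the cut metric of the optimal partition scaled by $n/(|S^*||\bar{S}^*|)$), obtain $\phi^*$ for the linear term, and bound the quadratic term by $(1+\lambda n)\phi^*$ using exactly the two facts $n/(|S^*||\bar{S}^*|)\leq 1$ and $n/(|S^*||\bar{S}^*|)\leq \phi^*$ derived from the hypotheses. The only difference is a cosmetic one in how the edge/non-edge contributions to the regularizer are split, and your closing remark on why connectivity is needed is a nice (though not required) addition.
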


\begin{proof}
	The quadratic regularization of the sparsest cut LP relaxation is
	\begin{equation}
	\label{scqp}
	\begin{array}{lll} \text{minimize} & \sum_{i<j} c_{ij} x_{ij} + (1/2\gamma) \sum_{i<j} w_{ij} x_{ij}^2\\ \subjectto & \sum_{i<j} x_{ij} = n & \\ &x_{ij} \leq x_{ik} + x_{jk} &  \text{ for all $i,j,k$} \\ & x_{ij} \geq 0 &\text{ for all $i,j$}. \end{array}
	\end{equation}
	The result we prove here relates the optimal solution of~\eqref{scqp} directly back to the minimum sparsest cut score $\phi^*$, rather than back to the LP relaxation of sparsest cut~\eqref{sclp}. This makes sense given that our purpose in solving these convex relaxations is to develop approximation results for the original NP-hard sparsest cut objective.
	
	Let $S^* \subset V$ be the set of nodes inducing the minimum sparsest cut partition of $G$, so that
	
	\[ \phi^* = \frac{\cut(S^*)}{|S^*|} + \frac{\cut(S^*)}{|\bar{S}^*|} = \frac{n \cut(S^*)}{|S^*| |\bar{S}^*|}. \]
	
	Without loss of generality, assume $|S^*| \leq |\bar{S}^*|$. In the statement of the theorem we assume that $G$ is connected, $n > 4$, and $|S^*| > 1$. The connectivity of $G$ ensures the problem can't be trivially solved by finding a single connected component, and guarantees that $\cut(S^*) \geq 1$. Together the remaining two assumptions guarantee that $\frac{ n}{|S^*| |\bar{S}^*|} \leq \frac{ n}{2(n-2)} \leq 1$, which will be useful later in the proof. We will also use the fact that $\frac{n}{|S^*| |\bar{S}^*|} \leq \frac{n\cut(S^*)}{|S^*| |\bar{S}^*|} = \phi^*$. Note that if $n \leq 4$, the problem is trivial to solve by checking all possible partitions, and if $|S^*| = 1$ then the minimum sparsest cut problem is easy to solve by checking all $n$ partitions that put a single node by itself. 
	
	In order to encode the optimal partition as a vector, define $\vs^* = (s_{ij}^*)$ by
	\[ s^*_{ij} = \begin{cases} \frac{n}{|S^*| |\bar{S}^*|} & \text{ if nodes $i$ and $j$ are on opposite side of the partition $\{S^*, \bar{S}^* \} $} \\
	0 &\text{ otherwise}. \end{cases} \]
	Observe that this vector $\vs^*$ satisfies the constraints of~\eqref{scqp} and that
	\[ \sum_{i<j} c_{ij} s^*_{ij} = \sum_{(i,j) \in E} s^*_{ij}  = \frac{\cut(S^*) n} {|S^*| |\bar{S}^*|}  = \phi^*. \]
	
	We can also prove a useful bound on the quadratic term in the objective:
	\begin{align*}
	(\vs^*)^T \mW{\vs^*} &= \sum_{i<j} w_{ij} (s_{ij}^*)^2 \\
	&= \sum_{(i,j) \in E}  (s_{ij}^*)^2 + \sum_{(i,j) \notin E} \lambda (s_{ij}^*)^2 \\
	&< \sum_{(i,j) \in E}  (s_{ij}^*)^2 + \sum_{i<j} \lambda (s_{ij}^*)^2 \\
	&= \cut(S^*) \frac{ n^2}{|S^*|^2 |\bar{S}^*|^2} + \lambda |S^*| |\bar{S}^*|\frac{n^2}{|S^*|^2 |\bar{S}^*|^2} \\
	&= \phi* \frac{ n}{|S^*| |\bar{S}^*|} + \lambda n \frac{n}{|S^*||\bar{S}^*|} \\
	&\leq \phi^*(1 + \lambda n),
	\end{align*}
	where we have used the fact that ${n}/({|S^*||\bar{S}^*|}) \leq \min \{ 1, \phi*\}$ because of our simple assumptions on $G$. With more restrictive assumptions and careful analysis we could obtain even better approximation guarantees, but our aim is simply to show for now that we can eventually obtain an $O(\log n)$ approximation for sparsest cut by minimizing a quadratic program~\eqref{scqp} instead of the original Leighton-Rao LP~\eqref{sclp}.
	
	Let $\hat{\vx}$ be the optimal solution for the QP~\eqref{scqp}, and recall that $\vs^*$ is another feasible point. We combine the bounds shown above to prove the final result:
	
	\[  \sum_{i<j} c_{ij} \hat{x}_{ij} < \sum_{i<j} c_{ij} \hat{x}_{ij} + \frac{1}{2\gamma}  \sum_{i<j} w_{ij} \hat{x}^2_{ij} \leq  \sum_{i<j} c_{ij} s^*_{ij} +  \frac{1}{2\gamma} \sum_{i<j} w_{ij} (s^*_{ij})^2 \leq \phi^* + \frac{1}{2\gamma}(1 + \lambda n) \phi^*. \]
\end{proof}

\section{Improved A Posteriori Approximations}
The approximation bounds in the previous section provide helpful suggestions for how to set parameters $\gamma$ and $\mW$ before running Dykstra's projection algorithm on a quadratic regularization of a metric-constrained LP. Once we have chosen these parameters and solved the quadratic program, we would like to see if we can improve these guarantees using the output solution for the QP.

\subsection{A First Strategy for Improved Bounds}
Consider again the optimal solutions to the LP and QP given by
\begin{align*}
\vx^* &= \argmin_\mathcal{A} \,\, \vc^T \vx \\
\hat{\vx} &= \argmin_\mathcal{A} \,\, \vc^T \vx + \frac{1}{2\gamma} \vx^T \mW \vx.
\end{align*}
where $\mathcal{A} = \{ \vx \in \mathbb{R}^N : \mA \vx \leq \vb \}$ is the set of feasible solutions. For each of the NP-hard graph clustering objectives we have considered, we have proven a sequence of inequalities of the form
\[ \vc^T \vx^* \leq  \vc^T \hat{\vx} \leq  \vc^T \hat{\vx}  + \frac{1}{2\gamma} \hat{\vx}^T \mW \hat{\vx} \leq  \vc^T \vx^* + \frac{1}{2\gamma} (\vx^*)^T \mW (\vx^*) \leq (1 + A)OPT \]
where $A$ is a term in the approximation factor (e.g. $1/\gamma$, $1/(2\gamma)$, $(1 + \lambda n)/\gamma$) and OPT is the optimal score for the NP-hard objective. If we have already computed $\hat{\vx}$, we can improve this approximation result by computing
\[ R = \frac{\hat{\vx}^T \mW \hat{\vx}}{2\gamma \vc^T \hat{\vx}}. \]
We then get an improved approximation guarantee:
\[ \vc^T \hat{\vx}  + \frac{1}{2\gamma} \hat{\vx}^T \mW \hat{\vx} = (1+R)\vc^T \hat{\vx} \implies \vc^T \hat{\vx} \leq \frac{1+A}{1+R} OPT. \]
In some cases $R$ will be small and this improvement will be minimal. However, intuitively we can see that in some special cases $R$ may be large enough to significantly improve the approximation factor. For example, it may be the case that for some correlation clustering relaxation, we choose $\gamma$ large enough so that the optimal solution to the QP, $\hat{\vm}$, and the optimal solution to the LP, $\vm^*$, are actually identical. Even after computing $\hat{\vm}$ we may not realize that $\vc^T \hat{\vx} = \vc^T \vx^*$. However, in some cases, a significant proportion of the $m^*_{ij} = \hat{m}_{ij}$ variables will close to zero or close to one. Thus, $\hat{m}_{ij} \approx \hat{m}_{ij}^2$ for many pairs $i,j$. For the correlation clustering relaxation this will mean that $\hat{\vx} \mW \hat{\vx} \approx 2 \vc^T \hat{\vx} \implies R \approx A$. Even in cases where $\vx^*$ and $\hat{\vx}$ are not identical but very close, similar reasoning shows that the above a posteriori approximation result may be much better than the a priori $(1+A)$ approximation. We note that our approximation results for correlation clustering in the experiments section are greatly aided by this a posteriori guarantee.


\subsection{Improved Guarantees by Solving a Small LP}
We outline one more approach for getting improved approximation guarantees, this time based on a careful consideration of dual variables $\hat{\vy}$ computed by Dykstra's method. This result requires a more sophisticated approach than the guarantee given in the last section. We find it extremely helpful for providing strong a posteriori guarantees when solving our quadratic relaxation of sparsest cut.

\newcommand{\hx}{\hat{\vx}}
\newcommand{\hy}{\hat{\vy}}
\newcommand{\hp}{\hat{\vp}}
\newcommand{\tx}{\tilde{\vx}}
\newcommand{\hc}{\hat{\vc}}

Once more we consider our initial linear program, which we assume is too challenging to solve using black-box software because of memory constraints:
\begin{align}
\min_{\vx} \,\, &\vc^T \vx \label{lp1}\\
\text{s.t. } &\mA \vx \leq \vb. \notag
\end{align}
We again let $\vx^*$ denote the (unknown) optimizer for~\eqref{lp1}. In practice, we solve a quadratic regularization:
\begin{align}
\min_{\vx} \,\, &\vc^T \vx + \frac{1}{2\gamma} \vx^T \mW \vx\label{qp1}\\
\text{s.t. } &\mA \vx \leq \vb. \notag
\end{align}
We solve~\eqref{qp1} by finding a primal-dual pair of vectors $(\hat{\vx}, \hat{\vy})$ satisfying KKT conditions. In particular, as noted in previous sections, these vectors satisfy
\begin{align}
\label{kkt}
&\frac{1}{\gamma} \mW \hat{\vx} = -\textbf{A}^T \hat{\vy} - \vc \\ 
\label{optqp}
&-\vb^T \hy - \frac{1}{2\gamma} \hx^T \mW \hx = \vc^T \hx + \frac{1}{2\gamma} \hx^T \mW \hx. 
\end{align}
Given this setup, we prove a new theorem for obtaining a lower bound on $\vc^T \xs$ by considering $\hy$ and solving another small, less expensive LP.

\begin{theorem}
	\label{tinylpthm}
	Given $(\hat{\vx}, \hat{\vy})$, set $\hat{\vp} = 1/\gamma \mW \hx$ and let $\tilde{\vx}$ be the optimal solution to the following new linear program:
	\begin{align}
	\max_{\vx} \,\, &\hp^T \vx \label{tinylp}\\
	\text{s.t. } & \vc^T \vx \leq \vc^T \hx \notag\\
	& \vx \in \mathcal{B} \notag
	\end{align}
	where $\mathcal{B}$ is any set which is guaranteed to contain $\vx^*$ (i.e. $\mathcal{B}$ encodes a subset of constraints that are known to be satisfied by $\vx^*$). Then we have the following lower bound on the optimal solution to~\eqref{lp1}:
	\begin{equation}
	-\vb^T \hy - \hp^T \tx \leq \vc^T \xs.
	\end{equation}
	Furthermore, if $\vx^* = \hx = \tx$, then this bound is tight.
\end{theorem}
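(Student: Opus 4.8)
The plan is to derive the inequality from a short weak-duality computation anchored on the stationarity condition~\eqref{kkt}, and then to check tightness by substituting the hypothesis into the second KKT relation~\eqref{optqp}.

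First I would rewrite~\eqref{kkt} in the form that matters: since $\hp = \tfrac{1}{\gamma}\mW\hx = -\mA^T\hy - \vc$, we get $\mA^T\hy = -(\hp + \vc)$. Next I would argue that $\xs$ is feasible for the auxiliary program~\eqref{tinylp}. It lies in $\mathcal{B}$ by hypothesis; and because $\hx$ is feasible for~\eqref{lp1} (i.e.\ $\hx \in \mathcal{A}$) while $\xs$ minimizes $\vc^T\vx$ over $\mathcal{A}$, we have $\vc^T\xs \leq \vc^T\hx$. Hence $\xs$ satisfies both constraints of~\eqref{tinylp}, and optimality of $\tx$ for that program gives $\hp^T\xs \leq \hp^T\tx$, equivalently $-\hp^T\xs \geq -\hp^T\tx$.

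Then I would combine primal and dual feasibility in the usual weak-duality way. From $\mA\xs \leq \vb$ and $\hy \geq 0$ we get $\hy^T\mA\xs \leq \vb^T\hy$; substituting $\mA^T\hy = -(\hp+\vc)$ turns the left-hand side into $-(\hp+\vc)^T\xs$, so $-(\hp+\vc)^T\xs \leq \vb^T\hy$, which rearranges to $\vc^T\xs \geq -\vb^T\hy - \hp^T\xs$. Chaining this with $-\hp^T\xs \geq -\hp^T\tx$ from the previous step yields $\vc^T\xs \geq -\vb^T\hy - \hp^T\tx$, the claimed lower bound.

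For the tightness claim, I would use~\eqref{optqp}: rearranging it gives $-\vb^T\hy = \vc^T\hx + \tfrac{1}{\gamma}\hx^T\mW\hx = \vc^T\hx + \hp^T\hx$, hence $-\vb^T\hy - \hp^T\hx = \vc^T\hx$. Under the assumption $\xs = \hx = \tx$, the left-hand side is precisely $-\vb^T\hy - \hp^T\tx$ and the right-hand side is $\vc^T\xs$, so equality holds. I do not anticipate a genuine obstacle; the only step needing a moment's care is verifying that $\xs$ is feasible for~\eqref{tinylp}, since that is exactly where LP-optimality of $\xs$ combined with LP-feasibility of $\hx$ enters, and it is what lets us pass from $\hp^T\xs$ to $\hp^T\tx$. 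Everything else is standard bookkeeping, with $\hp$ playing the role of the cost perturbation induced by the quadratic regularizer.
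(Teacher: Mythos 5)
Your proof is correct and takes essentially the same route as the paper: both arguments rest on the KKT stationarity relation $\hp = -\mA^T\hy - \vc$, the observation that $\xs$ is feasible for the auxiliary LP~\eqref{tinylp} (so $\hp^T\xs \leq \hp^T\tx$), and a weak-duality inequality giving $-\vb^T\hy \leq (\vc+\hp)^T\xs$, with tightness coming from~\eqref{optqp}. The only cosmetic difference is that the paper packages the weak-duality step via an explicitly constructed perturbed LP with cost $\hc = \vc + \hp$ and its dual, whereas you inline the same inequality directly from $\hy \geq 0$ and $\mA\xs \leq \vb$, and you spell out the tightness computation that the paper leaves implicit.
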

\begin{proof}
	The dual of the original linear program~\eqref{lp1} is
	\begin{align}
	\max \,\, & -\vb^T \vy \label{duallp}\\
	\text{s.t. } & -\mA^T \vy - \vc = 0 \notag \\
	& \vy \geq 0.\notag
	\end{align}
	One way to obtain a lower bound on $\vc^T \vx^*$ would be to find some feasible point $\vy$ for~\eqref{duallp}, in which case $- \vb^T \vy \leq \vc^T \vx^*$. Note that we have access to a vector $\hy$ satisfying $\hy \geq 0$ and $\textbf{A}^T \hy - \vc = \hp = (1/\gamma)\mW \hx$. This $\hy$ is not feasible for~\eqref{duallp}, but we note that if the entries of $\hp$ are very small (which they will be for large $\gamma$), then the constraint $\textbf{A}^T \vy - \vc = 0$ is \emph{nearly} satisfied by $\hy$. If we define a new vector $\hat{\vc} = \vc + \hp$, then we can observe that $\hy$ is feasible for a slightly perturbed linear program:
	\begin{align}
	\max \,\, & -\vb^T \vy \label{pertdual}\\
	\text{s.t. } & -\mA^T \vy - \hc = 0 \notag \\
	& \vy \geq 0.\notag
	\end{align}
	We realize that this is the dual of a slight perturbation of the original LP~\eqref{lp1}:
	\begin{align}
	\min_{\vx} \,\, &\hc^T \vx \label{pertlp}\\
	\text{s.t. } &\mA \vx \leq \vb. \notag
	\end{align}
	Since $\hy$ is feasible for~\eqref{pertdual} and~$\xs$ is feasible for~\eqref{pertlp}, we have the following inequality:
	\begin{equation}
	-\vb^T \hy \leq \hc^T \vx^* = \vc^T \vx^* + \hp \vx^*. \label{laststep}
	\end{equation}
	Finally, observe that $\vx^*$ is feasible for the LP~\eqref{tinylp} defined in the statement of the theorem, and therefore $\hp \vx^* \leq \hp \tx$. Combining this fact with~\eqref{laststep} we get our final result:
	\[ -\vb^T \hy \leq \vc^T \vx^* + \hp \vx^* \leq  \vc^T \vx^* + \hp \tx \implies -\vb^T \hy - \hp^T \tx \leq \vc^T \xs. \]
	If we happen to choose $\gamma > 0$ and $\mW$ in such a way that $\vx^* = \hx$, and then pick a set $\mathcal{B}$ so that $\tx = \vx^*$, then property~\eqref{optqp} ensures that this bound will be tight. 
\end{proof}
Typically it will be difficult to choose parameters in such a way that $\vx^* = \tx = \hx$. However, the fact that this bound is tight for a certain choice of parameters is a good sign that the bound will not be too loose to be useful in practice as long as we choose parameters carefully. 

\subsection{A Bound for Sparsest Cut}
Consider the quadratic regularization of the sparsest cut relaxation shown in~\eqref{scqp}, with diagonal weight matrix defined as in Section~\ref{scbound}. Assume $(\hx, \hy)$ is the set of primal and dual variables obtained by solving the objective with Dykstra's method. We give a corollary to Theorem~\ref{tinylpthm} that shows how to obtain good a posteriori approximations for how close $\vc^T \hx$ is to the original LP relaxation of sparsest cut~\eqref{sclp}.
\begin{corollary}
	\label{scapost}
	Let $\tx = (\tilde{x}_{ij})$ be the optimizer for the following LP:
	\begin{equation}
	\label{tinyLR}
	\begin{array}{lll} \text{maximize} &(1/\gamma) \sum_{i<j} (w_{ij} \hat{x}_{ij}) x_{ij}\\ \subjectto & \sum_{i<j} x_{ij} = n & \\ & \sum_{(i,j) \in E} x_{ij} \leq \sum_{(i,j) \in E} \hat{x}_{ij} \\ &0 \leq x_{ij}\leq \frac{n}{n-1} &  \text{ for all $i,j$}. \end{array}
	\end{equation}
	Then 
	\[ n \hat{y}_1 - n\hat{y}_2 -\frac{1}{\gamma} \sum_{i<j} w_{ij} \hat{x}_{ij} \tilde{x}_{ij} \leq \sum_{(i,j) \in E} x_{ij}^* \]
	where $\hat{y}_1$ and $\hat{y}_2$ are correction variable within the dual vector $\hy$, corresponding to the constraints $\sum_{i<j} x_{ij} \leq n$ and $-\sum_{i<j} x_{ij} \leq -n$ respectively. These two constraints combine to form the equality constraint $\sum_{i<j} x_{ij} = n$.
\end{corollary}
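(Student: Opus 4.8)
The plan is to obtain Corollary~\ref{scapost} as a direct instantiation of Theorem~\ref{tinylpthm}, applied to the data $(\mA,\vb,\vc,\mW,\gamma)$ defining the sparsest-cut quadratic relaxation~\eqref{scqp}, with the auxiliary set taken to be
\[ \mathcal{B} = \Big\{\, \vx : \textstyle\sum_{i<j} x_{ij} = n,\ \ 0 \le x_{ij} \le \tfrac{n}{n-1}\ \text{ for all } i,j \,\Big\}. \]
First I would unwind $\hp = \tfrac1\gamma \mW \hx$: since $\mW = diag(\vw)$ this is $\hat{p}_{ij} = \tfrac1\gamma w_{ij}\hat{x}_{ij}$, so the auxiliary LP~\eqref{tinylp} of the theorem --- $\max_{\vx}\ \hp^T\vx$ subject to $\vc^T\vx \le \vc^T\hx$ and $\vx\in\mathcal{B}$ --- becomes exactly~\eqref{tinyLR}: the objective $\hp^T\vx$ is $\tfrac1\gamma\sum_{i<j} w_{ij}\hat{x}_{ij} x_{ij}$; the constraint $\vc^T\vx \le \vc^T\hx$ is $\sum_{(i,j)\in E} x_{ij} \le \sum_{(i,j)\in E}\hat{x}_{ij}$ because $c_{ij}$ is the edge indicator; and $\vx\in\mathcal{B}$ supplies the remaining equality and box constraints. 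With $\tx$ the optimizer of~\eqref{tinyLR}, Theorem~\ref{tinylpthm} then gives $-\vb^T\hy - \hp^T\tx \le \vc^T\vx^* = \sum_{(i,j)\in E} x^*_{ij}$, and the corollary follows once the two constants are written out explicitly.

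Two things then need checking. The first is the hypothesis of Theorem~\ref{tinylpthm} that $\mathcal{B}$ actually contains the LP optimum $\vx^*$ of~\eqref{sclp}. The equality $\sum_{i<j} x^*_{ij} = n$ and the bounds $x^*_{ij}\ge 0$ are constraints of that LP, so the only nontrivial point is the upper bound $x^*_{ij}\le \tfrac{n}{n-1}$. I would prove this by fixing a pair $(i,j)$ and summing the metric constraints $x^*_{ij}\le x^*_{ik}+x^*_{jk}$ over the $n-2$ nodes $k\ne i,j$, obtaining $(n-2)x^*_{ij}\le \sum_{k\ne i,j}(x^*_{ik}+x^*_{jk})$. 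The right-hand side is a sum over $2(n-2)$ \emph{distinct} node pairs, none of which is $(i,j)$, hence is at most $\big(\sum_{p<q}x^*_{pq}\big) - x^*_{ij} = n - x^*_{ij}$; rearranging gives $(n-1)x^*_{ij}\le n$. Thus $\vx^*\in\mathcal{B}$, and --- as the corollary is meant to advertise --- $\mathcal{B}$ carries only $O(n^2)$ box constraints plus a single linear equality, so~\eqref{tinyLR} avoids the $O(n^3)$ triangle constraints and is comparatively cheap to solve.

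The second task is to simplify $-\vb^T\hy$. When~\eqref{sclp} is put in standard form $\mA\vx\le\vb$, every right-hand side is zero except for the two rows $\sum_{i<j} x_{ij}\le n$ and $-\sum_{i<j} x_{ij}\le -n$ that together encode $\sum_{i<j} x_{ij}=n$; let $\hat{y}_1,\hat{y}_2$ denote the dual entries of $\hy$ attached to those two rows. Hence $\vb^T\hy$ collapses to the single term from this equality, and $-\vb^T\hy$ reduces to $n\hat{y}_1 - n\hat{y}_2$ once the signs of the two halves are matched to the convention in the statement. Since $\hp^T\tx = \tfrac1\gamma\sum_{i<j} w_{ij}\hat{x}_{ij}\tilde{x}_{ij}$, substituting into $-\vb^T\hy - \hp^T\tx \le \vc^T\vx^*$ yields
\[ n\hat{y}_1 - n\hat{y}_2 - \frac1\gamma \sum_{i<j} w_{ij}\hat{x}_{ij}\tilde{x}_{ij} \ \le\ \sum_{(i,j)\in E} x^*_{ij}, \]
which is precisely the claimed bound.

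The only genuinely new ingredient, beyond mechanically specializing Theorem~\ref{tinylpthm}, is the coordinatewise bound $x^*_{ij}\le \tfrac{n}{n-1}$ that certifies $\vx^*\in\mathcal{B}$ while keeping $\mathcal{B}$ inexpensive, so I expect that short triangle-inequality summation to be the main point requiring care; the only secondary nuisance is keeping consistent the signs of the two dual multipliers $\hat{y}_1,\hat{y}_2$ representing the equality $\sum_{i<j}x_{ij}=n$ as it is split into two inequalities.
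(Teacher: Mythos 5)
Your proposal is correct and follows essentially the same route as the paper: instantiate Theorem~\ref{tinylpthm} with $\mathcal{B}$ given by the equality and box constraints, and verify $\vx^*\in\mathcal{B}$ via the bound $x^*_{ij}\le n/(n-1)$, which the paper proves by the same summation of triangle inequalities over the other $n-2$ nodes (phrased as a contradiction rather than your direct rearrangement).
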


\begin{proof}
	We just need to show that the assumptions of Theorem~\ref{tinylpthm} are satisfied. Let $x_{ij}^*$ be the optimal solution vector for the sparsest cut LP relaxation~\eqref{sclp}. Note that $x_{ij}^* \leq n/(n-1)$ for all $i,j$. If this were not the case and $x^*_{uv} > n/(n-1)$ for some pair $(u,v)$, then there would exist $(n-2)$ nodes $k$ distinct from $u$ and $v$ such that
	\[ \frac{n}{n-1} < x^*_{uv} \leq x^*_{uk} + x^*_{vk}. \]
	Then
	\[ \sum_{i<j} x^*_{ij} \geq x^*_{uv} + \sum_{u \neq k \neq v} x^*_{uk} + x^*_{vk} > \frac{n}{n-1} + (n-2)\frac{n}{n-1} = n, \]	
	which contradicts the fact that the entries of $\vx^*$ sum to $n$. We see then that all the constraints included in LP~\eqref{tinyLR} are satisfied by $x_{ij}^*$, so the result holds. 
\end{proof}

We will show in our upcoming experiments section that this bound is very helpful in guaranteeing the result of our projection method are very close to the solution of the original LP relaxation of sparsest cut. For the majority of our experiments we set parameters to $\lambda = 1/n$ and $\gamma = 5$, which guarantees a priori that the optimal QP solution will be within a factor 1.2 of the minimum sparsest cut score (Theorem~\ref{scthm}). In practice, it takes only a few seconds to solve LP~\eqref{tinyLR} after minimizing the quadratic objective, and this significantly improves the approximation guarantee. In the worst case out of 12 graphs, we use it to show that we are within 1.05 of the optimal {LP} lower bound for sparsest cut for the graph USAir97. Incidentally, we are able to compute the exact optimal LP solution for this graph using Gurobi, and we find that the actual approximation is 1.04, so our bound is very close. Typically the bound is able to confirm that our solution is within 1\% of the optimal LP score. When we decrease $\gamma$ to 2 and set $\lambda = 1/1000$ for a graph with 3086 nodes, the a posteriori approximation drops to 1.17, but this is still far better than the result we obtain by invoking the a priori guarantee in Theorem~\ref{scthm}.
\section{Experiments}
We implement Dykstra-based solvers for relaxations of sparsest cut (DykstraSC) and correlation clustering (DykstraCC)
in the Julia programming language. 
Most previous metric-constrained LP solvers have managed to obtain results only on graphs with~500 or
fewer nodes~\cite{van2007correlation,Agarwal2008metricmod,glasner2011contour}. In contrast, Dhillon et al.\ apply their triangle-fixing algorithm to solve metric nearness problems on random $n \times n$ dissimilarity matrices with~$n$ up
to 5000~\cite{dhillon2004tfa}. However, their method simply runs Dykstra's method until the change in the solution vector falls below a certain threshold. Since this approach does not take constraint satisfaction or duality gap into consideration, it comes with no output guarantees. Here we use DykstraSC to solve the sparsest cut relaxation on real-world graphs with up to 3068 nodes.
Our method is able to satisfy constraints to within machine precision, and our choice of $\gamma$ allows us to obtain strong guarantees with respect to the optimal sparsest cut score.
We also solve the correlation clustering relaxation to within a small constraint tolerance on signed, weighted graphs
with up to~11,204 nodes. This corresponds to solving a quadratic program with over $7 \times 10^{11}$ constraints. 



\subsection{Using Gurobi Software}
We compare our algorithms against black-box Gurobi optimization software. A free academic license for Gurobi software can be obtained online at \url{Gurobi.com}. When comparing our algorithm against black-box software, we take care to ensure as fair of a comparison as possible. Gurobi possesses a number of underlying solvers for LPs. In practice we separately run Gurobi's barrier method (i.e. the interior point solver), the primal simplex method, and the dual simplex method, to see which performs the best. For the interior point method, Gurobi's default setting is to convert any solution it finds to a basic feasible solution, but we turn this setting off since we do not require this of our own solver and we are simply interested in finding any solution to the LP. In practice we find that the interior point solver is the fastest. The runtimes we report do not include the time spent forming the constraint matrix. This in and of itself is an expensive task that must be taken into account when using black-box software to solve problems of this form.

\subsubsection{Lazy-Constraint Method}
Both for sparsest cut and correlation clustering we also test out an additional \emph{lazy-constraint} method when employing Gurobi software. This procedure works as follows:
\begin{enumerate}
	\item Given a metric-constrained LP, solve the objective on a subproblem that includes all the same constraints \emph{except} metric constraints.
	\item Given the solution to the subproblem, check for violations in the metric constraints. Update the constraint set to include all such violated constraints. Re-solve the LP using black-box software on the updated set of constraints.
	\item Continually re-solve the problem, check for violations, and update the constraint set. If we reach a point when all original metric constraints are satisfied before the algorithm fails due to memory issues, the solution is guaranteed to be the solution to the original metric-constrained LP.
\end{enumerate}
This procedure in some cases leads to significantly improved runtimes since it may permit us to solve the original LP without ever forming the entire $O(n^3) \times O(n^2)$ constraint matrix. Quite often we find, especially for correlation clustering problems, that many constraints will naturally be satisfied without explicitly including them in the problem setup. However, for the sparsest cut relaxation, we find that a large number of metric constraints are tight at optimality, and therefore must be included explicitly in the constraint set. In practice therefore we observe that for the sparsest cut relaxation, Gurobi continues to add constraints until a very large percentage of the original constraints are included explicitly. It therefore typically does not save time or space to repeatedly solve smaller subproblems.

\subsection{Real-world Graphs}
In our experiments we use real-world networks obtained almost exclusively from the SuiteSparse Matrix Collection~\cite{suitesparse}. In particular we use graphs from the Newman, Arenas, Pajek, and MathWorks groups for our sparsest cut experiments. In our correlation clustering experiments, we use Power, from the Newman group, and three collaboration networks from the SNAP repository~\cite{snapnets}. 

The graphs fall into the following categories:
\begin{itemize}
	\item \textbf{Citation networks}: SmallW and SmaGri
	\item \textbf{Collaboration networks}: caGrQc, caHepTh, caHepPh, Netscience, Erdos991
	\item \textbf{Power grid}: Power
	\item \textbf{US flights graph}: USAir97
	\item \textbf{Web-based graphs}: Harvard500 (web matrix), Polblogs (links between political blogs), Email (email correspondence graph)
	\item \textbf{Word graph}: Roget (thesaurus associations)
	\item \textbf{Biology networks} C. El-Neural (neural network for nematode C. Elegans), C. El-Meta (metabolic network for C. Elegans).
\end{itemize}
We also run one experiment on a graph not included in the SuiteSparse Matrix Collection. The graph Vassar85 is a snapshot of the Facebook network at Vassar College from the Facebook100 datasets. We include it in order to run our algorithm on an undirected network with around 3000 nodes.

Before running experiments on any of the graphs above, we make all edges undirected, remove edge weights, and find the largest connected component. In this way we ensure we are always working with connected, unweighted, and undirected networks.

\subsection{The Sparsest Cut Relaxation}
We run DykstraSC on ranging in size from~198 to~3068 nodes. Our machine has two 14-core 2.66 GHz Xeon processors and for ease of reproducibility we limit experiments to~100GB of RAM. 
Results are shown in Table~\ref{skyLR} and Figure~\ref{runtime}. Gurobi has an advantage on smaller graphs, but slows down and then run out of memory once the graphs scale beyond a few hundred nodes. Since DykstraSC is in fact optimizing a quadratic regularization of the sparsest cut LP relaxation, we also report how close our solution is to the optimal LP solution, either by comparing against Gurobi or using our a posteriori approximation guarantee presented in Corollary~\ref{scapost}. In nearly all cases we are within 1\% of the optimal LP solution.

When running Gurobi, for graphs with fewer than 500 nodes we have run all three solvers (interior point, dual simplex, and primal simplex). We report times for the interior point solver, since it proves to be the fastest in all cases. Gurobi runs out of memory when trying to form the entire constraint matrix for larger problems. We also test the lazy-constraint method to find it yields almost not benefit for the sparsest cut relaxation. For graphs smaller than Harvard500, where Gurobi was able to work with the entire constraint matrix, coupling the interior point solver with the lazy-constraint procedure leads to much longer runtimes. Additionally, we find in all cases that by the time the lazy-constraint solver converged, well over half of the original constraint set had to be explicitly included in order to force all other metric constraints to be satisfied. Therefore, in addition to significantly worse runtimes, we see only a minor decrease in the memory requirement.

On larger graphs, the slight decrease in memory afforded by the lazy-constraint method does allows us to solve the sparsest cut relaxation on Harvard500, which was not possible when forming the entire constraint matrix up front. This is the only positive result we see for using this approach for this relaxation. However, it still requires solving a large number of expensive subproblems, leading to a runtime that is an order of magnitude slower than DykstraSC. We also tried the lazy-constraint approach on Roget, SmaGri, Email, and Polblogs. For all of these graphs, Gurobi spends a considerable amount of time solving subproblems, but still eventually runs out of memory before finding a solution. Due to this repeated failure to produce results on much smaller graphs, we did not attempt to run the lazy-constraint solver on Vassar85.

\begin{table}[h]
	\caption{We solve the LP relaxation for sparsest cut via DykstraSC on~13 graphs.
For Vassar85, we set~$\gamma = 2$ and~$\lambda = 1/1000$; for all other datasets we set~$\gamma = 5$ and~$\lambda = 1/n$. Both DykstraSC and Gurobi (when it doesn't run out of memory) solve the problems to within a relative gap tolerance of~$10^{-4}$, and satisfy constraints to within machine precision. The last column reports an upper bound on the ratio between the LP score produced by DykstraSC and the optimal LP solution. Time is given in seconds.}
	\centering
	\begin{tabular}{lrrrrrrr}
		\toprule
		Graph & $|V|$ & $|E|$  & \# constraints & Gurobi Time & Dykstra Time & Approx \\
		\midrule
		Jazz & 198 & 2742 & $3.8 \times 10^6$& 60 &  81 & 1.003 \\ 
		SmallW & 233 & 994 & $6.2 \times 10^6$& 93 & 166& 1.001 \\ 
		C.El-Neural & 297 & 2148 &  $1.2 \times 10^{7}$ & 274 & 350 & 1.000 \\ 
		USAir97 & 332 & 2126 & $1.8 \times 10^7$ & 471 & 511 & 1.041 \\ 
		Netscience & 379 & 914 & $2.7 \times 10^{7}$ & 887 & 1134 & 1.000 \\ 
		Erdos991 & 446 & 1413 & $4.4 \times 10^{7}$ & 2574  & 1954 & 1.011 \\ 
		C.El-Meta & 453 & 2025 & $4.6 \times 10^{7}$ & 2497 & 1138 & 1.000\\ 
		Harvard500 & 500 & 2043 & $6.2 \times 10^{7}$ & 18769  & 1427 & 1.000 \\ 
		Roget & 994 & 3640 & $4.9 \times 10^{8}$ & out of memory & 53449 & 1.008 \\
		SmaGri & 1024 & 4916 & $5.4 \times 10^{8}$ & out of memory  & 25703 & 1.002 \\ 
		Email & 1133 & 5451 & $7.3 \times 10^{8}$& out of memory  & 34621 & 1.005 \\ 
		Polblogs & 1222 & 16714 & $9.1 \times 10^{8}$ & out of memory  & 41080 & 1.013 \\ 
		Vassar85 & 3068 & 119161 & $1.4 \times 10^{10}$ & out of memory  & 155333& 1.165 \\ 	
		\bottomrule 
	\end{tabular}
	\label{skyLR}
\end{table}

\begin{figure}
	\begin{minipage}[c]{0.4\linewidth}
		\centering
	\includegraphics[width=\linewidth]{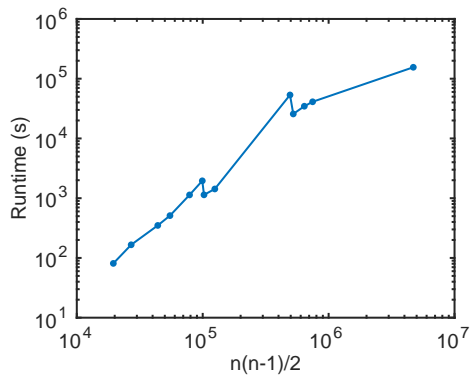}
	\end{minipage}
	\hspace{1cm}
	\begin{minipage}[c]{0.45\linewidth}
		\caption{Runtimes for DykstraSC on real-world graphs with 198 to 3068 nodes. If $n$ is the number of nodes in the graph, then DykstraSC solves for $n(n-1)/2$ distance scores.}
			\label{runtime}
	\end{minipage}
\end{figure}

\subsection{Weighted Correlation Clustering}
We convert several real-world graphs into instances of correlation clustering using the approach of Wang et
al.~\cite{wang2013scalable}. The procedure is as follows:
\begin{enumerate}
	\item Given an input graph $G = (V,E)$, compute the Jaccard coefficient between each pair of nodes $i,j$:
	\[ J_{ij} = \frac{|N(i)\cap N(j)|}{|N(i)\cup N(j)|} \]
	where $N(u)$ is the set of nodes adjacent to node $u$.
	\item Apply a non-linear function on Jaccard coefficients to obtain a score indicating similarity or dissimilarity:
	\[ S_{ij} = \log\left(  \frac{1 + (J_{ij}- \delta)}{1-(J_{ij} - \delta)}\right). \]
	Here, $\delta$ is a parameter set so that $S_{ij} > 0$ if $J_{ij} > \delta$ and $S_{ij} < 0$ when $J_{ij} < \delta$. Following Wang et al.~\cite{wang2013scalable}, we fix $\delta = 0.05$.
	
	\item Wang et al.\ stop after the above step and use $S_{ij}$ scores for their correlation clustering problems. We additionally offset each entry by $\pm \epsilon$ to avoid cases where edge weights are zero:
	\[ Z_{ij} = \begin{cases} S_{ij} + \epsilon &\text{ if $S_{ij} > 0$} \\  
	S_{ij} - \epsilon & \text{if $S_{ij} < 0$} \\ 
	\epsilon & \text{ if $S_{ij} = 0$ and $(i,j) \in E$} \\ 
	-\epsilon & \text{ if $S_{ij} = 0$ and $(i,j) \notin E$}. \\
	\end{cases} \]
	If $S_{ij} = 0$, this indicates there is no strong similarity or dissimilarity between nodes based on their Jaccard coefficient. If in this case nodes $i$ and $j$ are adjacent, we interpret this as a small indication of similarity and assign them a small positive weight. Otherwise we assign a small negative weight. In all our experiments we fix $\epsilon = 0.01$.
\end{enumerate}
The sign of $Z_{ij}$ indicates whether nodes $i$ and $j$ are similar or dissimilar, and $w_{ij}  = |Z_{ij}| > 0$ is the non-negative weight for the associated correlation clustering problem.
Results for running DykstraCC and Gurobi on the resulting signed graphs are shown in Table~\ref{skyCC}.

On problems of this size, we have no hope of ever forming the entire constraint matrix and using Gurobi without running out of memory. Therefore, we restrict to using the lazy-constraint approach, coupled with Gurobi's interior point solver. In one case, the lazy-constraint method converges very quickly. Effectively, it finds a small subset of constraints that are sufficient to force
all other metric constraints to be satisfied at optimality. However, Gurobi runs out of memory on the other large problems considered, indicating that, even if we are extremely careful, black-box solvers are unable to compete with our Dykstra-based approach.

Because the correlation clustering problems we address are so large, we set $\gamma = 1$ and run Dykstra's method until constraints are satisfied to within a tolerance of 0.01. We find that long before the constraint tolerance reaches this point, the duality gap shrinks below $10^{-4}$. We note that although it takes a long time to reach convergence on graphs with thousands of nodes, DykstraCC has no issues with memory. Monitoring the memory usage of our machine, we noted that for the 11,204 node graph, DykstraCC was using only around~12 of the 100GB of available RAM. Given enough time therefore, we expect our method to be able to solve metric-constrained LPs on an even much larger scale. The ability to solve these relaxations on problems of this scale is already an accomplishment, given the fact that standard optimization software often fails on graphs with even a few hundred nodes. In future work we wish to continue exploring options for randomized and parallel projection methods, so that we can more quickly obtain answers for large-scale correlation clustering relaxations and other related problems.

\begin{table}[t]
	\caption{DykstraCC can solve convex relaxations of correlation clustering with up to~700 billion
constraints. The lazy-constraint Gurobi solver does very well for one very sparse graph, but runs out of memory on all
other problems. We set~$\gamma = 1$, and constraint tolerance to~$0.01$. 
Selecting a small~$\gamma$ leads to poorer approximation guarantees, but dramatically decreases the number of needed iterations until convergence. We can still obtain the standard~$O(\log n)$ approximation for weighted correlation clustering by applying previous rounding techniques~\cite{Demaine2003,Emanuel2003,charikar2005clustering}. 
}
	\centering
	\begin{tabular}{lrrrrrrr}
		\toprule
		Graph & $|V|$ & $|E|$  & \# constraints & Gurobi Time & Dykstra Time & Approx \\
		\midrule
				power & 4941 & 6594  & $6.0 \times 10^{10}$& 549 s & 7.6 hrs & 1.07 \\  
				caGrQc & 4158 & 13422 & $3.6 \times 10^{10}$ & out of memory & 6.6 hrs & 1.33 \\ 
				caHepTh & 8638 & 24806 & $3.2 \times 10^{11}$ & out of memory  & 88.3 hrs & 1.34 \\ 
				caHepPh & 11204 & 117619 & $7.0 \times 10^{11}$ & out of memory  & 167.5 hrs & 1.27 \\
		\bottomrule 
	\end{tabular}
	\label{skyCC}
\end{table}

\subsection{Experimental Differences Between Metric LPs}
We note that the correlation clustering relaxation appears categorically easier to solve than the sparsest cut relaxation in our experiments. Dykstra's method tends to generate a denser dual vector for the sparsest cut relaxation, leading to memory issues on relatively small problems. The lazy-constraint method, which can successfully solve some correlation clustering problems with thousands of nodes, appears to provide no benefit for the sparsest cut relaxation. These observations are perhaps surprising, given that the underlying constraint set for both problems is nearly identical. 

In order to explain this phenomenon, we first consider the original clustering objectives, rather than their LP relaxations. Note that the minimum sparsest cut always partitions a graph into two clusters, which often are at least somewhat balanced in size. Consider what this means for the binary variables $x_{ij}$ that encode the clustering: in the case of two balanced clusters, many of these variables will be zero. In other words, there will be a large number of tight triangle inequality constraints of the form $x_{ij} = 0 = 0 + 0 = x_{ik} + x_{jk}$, because of triplets $(i,j,k)$ that are all in the same cluster. Similarly, for two nodes $i,j$ in one cluster and a third node $k$ in the other cluster, the triangle constraints will also be tight:
\[ x_{ik} = c = c + 0 = x_{jk} + x_{ij}, \]
where $c >0$ is some constant depending on the graph and the underlying partition (see Section~\ref{scbound} for details). On the other hand, the optimal correlation clustering problem will often partition a graph into a large number of clusters. In this case, triplets of nodes $(i,j,k)$ that are all in distinct clusters are much more prevalent, and such triplets correspond to metric constraints that are not tight at optimality ($x_{ij} = c < c+ c = x_{ik} + x_{jk}$). When we relax these clustering objectives so that distances $x_{ij}$ are no longer binary, we expect this phenomenon to still be reflected to some degree or another in the relaxed distances. 

Intuitively, given that the sparsest cut relaxation typically involves a larger number of tight constraints, we would expect the dual vector in our projection algorithm to become dense more quickly, since adjustments need to be made more frequently and carefully at tight constraints. This explains the higher memory requirement we see for solving the sparsest cut relaxation. This also explains why the lazy-constraint black-box method is unsuccessful. The existence of many tight constraints implies that we need to eventually include a significant fraction of the metric constraints explicitly if we wish to solve the original problem. In practice therefore it is often best to simply include the full constraint set up front rather than solving a large number of subproblems that don't solve optimal solution.

\section{Discussion and Future Challenges}
In this paper we have developed an approach for solving expensive convex relaxations of clustering objectives that works
on a much larger scale than was previously possible. We now observe several challenges that seem inherent in improving
this approach, without significantly departing from the application of projection methods.
We tried variants of Bauschke's method~\cite{bauschke1996approx} and Haugazeau's projection
method~\cite{haugazeau1968inequations,bauschke2017convex}, which do not compute dual variables as Dykstra's does.
Such methods hence require only~$O(n^2)$ memory, instead of~$O(n^3)$, but come with significantly worse convergence guarantees. Because it is hard to determine in practice when these methods have converged, it is difficult to use them to obtain an output satisfying any guarantees with respect to the optimal solution. Another natural approach to consider is the use of parallelization or randomization. Parallel versions of
Dykstra's method exist~\cite{iusem1991convergence}, but they rely on averaging out a large number of very tiny changes
in each iteration, equal to the number of constraints. Since in our case there are~$O(n^3)$ constraints, we find that,
in practice, this averaging approach leads to changes that are so small that no meaningful progress can be made from one iteration to the next. The challenge in using a randomized approach (see~\cite{hildreths_rand}) is that visiting constraints at random leads to a much higher cost for visiting the same number of constraints. This is because accessing dual variables at random from a dictionary is slower in practice than sequentially visiting elements in an array of dual variables.

\bibliographystyle{plain}
\bibliography{triconstraints}  	

\begin{appendix}

\section{Lemma Supporting the Proof of Theorem 1}
\label{app1}
The full proof of Theorem~\ref{thm:metric-cc} relies the following lemma showing that we can safely discard upper and lower bound on $x_{ij}$ variables without changing the optimal result.

\begin{lemma}
	\label{lemma1}
	Even without the constraint family ``$0 \leq x_{ij} \leq 1$ for all pairs $(i,j)$'',
	every optimal solution to problem~\eqref{eq:cc3} in fact satisfies those constraints.
\end{lemma}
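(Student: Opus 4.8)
The plan is to show that if $\vx = (x_{ij})$ is optimal for~\eqref{eq:cc3} without the box constraints, then $0 \le x_{ij} \le 1$ automatically. The key objects are the two ``projection'' operations one can perform on a feasible point: capping every entry at $1$ from above, and flooring negative entries at $0$ from below. I would argue that each of these operations (i) preserves the metric constraints $x_{ij} \le x_{ik} + x_{jk}$, and (ii) does not increase the objective $\sum_{i<j} w_{ij}|x_{ij} - d_{ij}|$, since $d_{ij} \in \{0,1\}$. Because the operations are non-trivial exactly when some constraint in the box is violated, optimality forces them to be trivial, i.e. the original solution already lay in the box.

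First I would handle the upper bound. Define $\vx'$ by $x'_{ij} = \min\{x_{ij}, 1\}$. Since each $d_{ij} \le 1$, for every pair we have $|x'_{ij} - d_{ij}| \le |x_{ij} - d_{ij}|$ (truncating toward the interval $[0,1]$ can only move the entry closer to $d_{ij}$), so the objective does not increase. For the metric constraints I need $x'_{ij} \le x'_{ik} + x'_{jk}$; the only interesting case is when the right side was capped, and here one checks by a short case analysis on which of $x_{ik}, x_{jk}$ exceeded $1$ that the inequality still holds (e.g. if $x'_{ik} = 1$ then $x'_{ij} \le 1 \le x'_{ik} + x'_{jk}$). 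Then I would handle the lower bound symmetrically: define $x''_{ij} = \max\{x_{ij}, 0\}$ on the already-capped vector. Again $|x''_{ij} - d_{ij}| \le |x_{ij} - d_{ij}|$ because $d_{ij} \ge 0$, and the metric constraints are preserved because replacing a negative $x_{ij}$ on the left with $0$ only makes the left side smaller, while replacing a negative $x_{ik}$ on the right with $0$ is handled by noting that if $x_{ik} < 0$ then (since all entries are now $\le 1$) we still have $x''_{ij} \le 1 \le 1 + 0 \le x''_{ik} + x''_{jk}$ once one also accounts for $x''_{jk}$; the clean way is to observe $x''_{ij} \le \max\{x_{ij},0\} \le \max\{x_{ik}+x_{jk}, 0\} \le x''_{ik} + x''_{jk}$, using that $x''_{ik}, x''_{jk} \ge 0$ and each of $x''_{ik} \ge x_{ik}$, $x''_{jk} \ge x_{jk}$.

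Combining, $\vx''$ is feasible for~\eqref{eq:cc3} with box constraints and has objective no larger than that of $\vx$. Since $\vx$ was assumed optimal for the relaxed problem, $\vx''$ must also be optimal, and in particular the objective values agree. To conclude that $\vx$ itself already satisfied the box constraints, I would note that each truncation step strictly decreases the objective unless it changes nothing: if some $x_{ij} > 1$ with $w_{ij} > 0$, capping it strictly reduces the term $w_{ij}|x_{ij}-d_{ij}|$ (as $d_{ij} \le 1 < x_{ij}$), contradicting optimality; similarly for $x_{ij} < 0$. (If $w_{ij} = 0$ the corresponding variable is irrelevant and can be assumed in $[0,1]$ without loss of generality.)

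The main obstacle I expect is the metric-constraint bookkeeping in the flooring step: truncating from below at $0$ raises entries on the right-hand side of a triangle inequality, which is ``good,'' but it also raises entries on the left-hand side of other triangle inequalities, so one has to be careful that the same clipped vector works simultaneously for all constraints. The monotonicity identity $\min$ and $\max$ distribute appropriately over the inequality $x_{ij} \le x_{ik} + x_{jk}$ — i.e. $(x_{ij})^{+} \le (x_{ik}+x_{jk})^{+} \le (x_{ik})^{+} + (x_{jk})^{+}$ and the analogous capped statement using $x_{ij} \wedge 1 \le (x_{ik}\wedge 1) + (x_{jk}\wedge 1)$ whenever $x_{ik}+x_{jk}\ge x_{ij}$ and all entries are bounded — is what makes this go through cleanly, and isolating exactly that lemma is the crux of the argument.
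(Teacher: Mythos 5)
Your overall strategy is the same as the paper's: clip the unconstrained optimizer into $[0,1]$, check that the clipped point is still feasible for the metric constraints, observe that clipping can only decrease each term $w_{ij}\lvert x_{ij}-d_{ij}\rvert$ because $d_{ij}\in\{0,1\}$, and conclude by strictness of the decrease that a nontrivial clip would contradict optimality. The paper does the clipping in one shot and verifies feasibility of the clipped point by brute-force case analysis; you split the clip into a cap at $1$ and a floor at $0$ and try to isolate clean monotonicity lemmas, which is a tidier organization of the same idea.

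However, the cap-at-$1$ step, in the order you perform it (before flooring), has a real hole. The inference ``if $x'_{ik}=1$ then $x'_{ij}\le 1\le x'_{ik}+x'_{jk}$'' needs $x'_{jk}\ge 0$, which you have not yet established at that stage, and the ``crux'' inequality you state --- $\min\{x_{ij},1\}\le\min\{x_{ik},1\}+\min\{x_{jk},1\}$ whenever $x_{ij}\le x_{ik}+x_{jk}$ --- is false without a nonnegativity hypothesis (your ``bounded'' proviso does not save it): take $x_{ij}=0.5$, $x_{ik}=10$, $x_{jk}=-9$, where the left side is $0.5$ and the right side is $-8$. (The chain written in your flooring step also invokes $x''_{ik}\ge x_{ik}$, which fails for capped entries; you mean $x''_{ik}\ge x'_{ik}$.) There are two easy repairs. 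Either reverse the order --- floor first, since $(x_{ij})^{+}\le(x_{ik}+x_{jk})^{+}\le(x_{ik})^{+}+(x_{jk})^{+}$ holds unconditionally, then cap, at which point all entries are nonnegative and your case analysis goes through --- or observe that the constraint family of~\eqref{eq:cc3} contains all three orientations of each triangle, so adding $x_{ij}\le x_{ik}+x_{jk}$ and $x_{ik}\le x_{ij}+x_{jk}$ already forces $x_{jk}\ge 0$ at every feasible point; with that observation the floor step is vacuous and only the cap step is needed (this also trims the paper's own remark about checking $27$ sign cases). With either repair, and with your correct caveat that strictness of the objective decrease requires $w_{ij}>0$, the argument is complete and matches the paper's conclusion.
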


\begin{proof}
	The proof proceeds by contradiction: assume that $\mX = (x_{ij})$ is an optimal solution obtained by optimizing
	objective~\eqref{eq:cc3}, without constraints $x_{ij} \in [0,1]$.
	Assume also that at least one variable is greater than one or less than zero. Next, define a new set of variables $\mZ = (z_{ij})$ as follows:
	\[ z_{ij} = \begin{cases} 
	0 & \text{ if $x_{ij} < 0$ } \\
	x_{ij} & \text{ if $x_{ij} \in [0,1] $}\\
	1 & \text{ if $x_{ij} >1 $}\\
	\end{cases}
	\]
	Notice that this $\mZ$ would have a strictly lower (i.e. better) objective score than~$\mX$ for problem~\eqref{eq:cc3}, because $|z_{ij} - d_{ij}| \leq |x_{ij} - d_{ij}|$ for all $(i,j)$, and this inequality is strict for at least one pair of nodes since we assumed that there is at least one variable $x_{ij}$ that is not in $[0,1]$. It just remains to show that $\mZ$ also satisfies triangle inequality constraints and is thus feasible, which would lead to the desired contradiction to the optimality of $\mX$.
	
	Proving $\mZ$ is feasible is tedious, as it requires checking a long list of cases. We consider a triplet $(i,j,k)$, and we wish to check that
	\begin{align*}
	z_{ij} \leq z_{jk} + z_{ik} \\
	z_{ik} \leq z_{jk} + z_{ij} \\
	z_{jk} \leq z_{ij} + z_{ik}
	\end{align*}
	for all possible values of $(x_{ij}, x_{ik}, x_{jk})$. To illustrate the technique we will provide proofs for
	the cases where the~$\mX$ variables are all nonnegative, but may be larger than~$1$. The same approach works if we also
	considered the case where some of the $\mX$ are negative, though this involves checking~27 cases, which we do not list
	exhaustively here. Let $v_{ab}$ be a binary variable indicating whether $x_{ab} > 1$.
	\begin{itemize}
		\item Case 1: $(v_{ij}, v_{ik}, v_{jk}) = (0, 0, 0)$. 
		In this case the~$y$ variables are identical to the~$x$ variables and the constraints hold.
		\item Case 2: $(v_{ij}, v_{ik}, v_{jk}) = (1, 1, 1)$. 
		In this case $z_{ij} = z_{ik} = z_{jk} = 1$ and the constraints hold.
		\item Case 3: $(v_{ij}, v_{ik}, v_{jk}) = (1, 0, 0)$. 
		Since $x_{ij} > 1$, by construction $z_{ij} = 1$, and $z_{jk} = x_{jk} \leq 1$, $z_{ik} = x_{ik} \leq 1$, so we can confirm that:
		\begin{align*}
		z_{ij} &< x_{ij} \leq x_{ik} + x_{jk} = z_{ik} + z_{jk} \\
		z_{ik} &= x_{ik} \leq 1 \leq x_{jk} + 1 = z_{jk} +  z_{ij}\\
		z_{jk} &= x_{jk} \leq 1 \leq x_{ik} + 1 = z_{ik} + z_{ij} 
		\end{align*}
		\item Case 4: $(v_{ij}, v_{ik}, v_{jk}) = (0, 1, 0)$. 
		This is symmetric to case~3.
		\item Case 5: $(v_{ij}, v_{ik}, v_{jk}) = (1, 1, 0)$. In this case we see that $z_{ij} = 1 < x_{ij}$, $z_{ik} = 1 < x_{ik}$, and $z_{jk} = x_{jk}$, so
		\begin{align*}
		z_{ij} &= 1 < 1 + z_{jk} = z_{ik} + z_{jk} \\
		z_{ik} &= 1 < 1 + z_{jk} = z_{ij} + z_{jk} \\
		z_{jk} &< 1 \leq 1 + 1 = z_{ij} + z_{ik}.
		\end{align*}
		\item Case 6: $(v_{ij}, v_{ik}, v_{jk}) = (0, 0, 1)$. 
		Symmetric to cases 3 and 4.
		\item Case 7: $(v_{ij}, v_{ik}, v_{jk}) = (1, 0, 1)$. 
		Symmetric to case 5.
		\item Case 8: $(v_{ij}, v_{ik}, v_{jk}) = (0, 1, 1)$. 
		Symmetric to cases 5 and 7.
	\end{itemize}
\end{proof}

\section{Projection Methods and Quadratic Programming}
\label{app2}
In this appendix we provide more general background on Dykstra's projection method~\cite{dykstra1983algorithm} and how it
can be used to minimize a quadratic program. The results here are not new, but are rather a compilation of past work on
projection methods and quadratic programming which are most relevant to us.

\subsection{Quadratic Programming}
Let $\mA  \in \mathbb{R}^{N\times M}$, $\vb \in \mathbb{R}^M$, $\vc \in \mathbb{R}^N, \gamma > 0$, and $\mW \in \mathbb{R}^{N \times N}$ be a positive definite matrix. Consider the following quadratic program:
\begin{align}
\min_{\vx} \,\, &Q(\vx) = \frac{1}{2} \vx^T \mW \vx + \vc^T \vx \label{primal}\\
\text{s.t. } &\mA \vx \leq \vb. \notag
\end{align}
The dual of this quadratic program is
\begin{align}
\max_{\vy} \,\, D(\vy) = -\vb^T \vy - &\frac{1}{2} \left \|{\textbf{A}^T \vy + \vc } \right \|_{\mW^{-1}}^2 \label{dual}\\
\text{s.t. } & \vy \geq 0. \notag
\end{align}
The objectives are equal when the Karush–Kuhn–Tucker (KKT) conditions are satisfied, i.e., when we find vectors $\hat{\vx}$ and $\hat{\vy}$ that satisfy:
\paragraph{KKT Conditions}
\begin{enumerate}
	\item $\mW \hat{\vx} = -\textbf{A}^T \hat{\vy} -\vc$
	\item $\mA  \hat{\vx} \leq \vb$ 
	\item $\hat{\vy}^T (\mA  \hat{\vx}-\vb) = 0$
	\item $\hat{\vy} \geq 0$.
\end{enumerate}
If the above are satisfied, we know that $\hat{\vy}  = \argmin D(\vy)$ and $\hat{\vx} = \argmin Q(\vx)$ and $D(\hat{\vy} ) = Q(\hat{\vx})$. In general, if~$\vx$ satisfies $\mA \vx \leq \vb$ and $\vy \geq 0$ then we know that $D(\vy) \leq D(\hat{\vy} ) = Q(\hat{\vx}) \leq Q(\vx)$. 

\subsection{The Best Approximation Problem}
Let $C_i \subset \mathbb{R}^N$ for $i = 1, 2, \hdots , M$ be convex sets and $C = \cap_{i = 1}^M C_i$ be their intersection (which is also convex). Given $\vz \in \mathbb{R}^N$, the best approximation problem (BAP) is to find
\begin{equation}
\vx^* = P_C(\vz) = \arg \min_{\vx \in C}  || \vx - \vz ||^2 \label{bap}
\end{equation}
where we can specify any norm $||\cdot||$. $P_C(\vz)$ is the \emph{projection of $\vz$ onto $C$}.
BAP is often solved using \emph{projection methods}, which operate by visiting the constraints~$(C_i)$ cyclically and repeatedly performing easier projections of the form
\[ \vx_i = P_{C_i}(\vz_i) = \arg \min_{\vx \in C_i}  || \vx - \vz_i ||^2 \]
Note that we are \emph{not} restricted to only the standard Euclidean norm, in fact we will use a weighted norm in our work.

\subsection{Dykstra's Method for BAP}
Dykstra's method is one common approach to solving the BAP (see~\cite{dykstra1983algorithm}). In order to solve~\eqref{bap}, Dykstra's method computes the following updates:
\begin{enumerate}
	\item $I_i = 0 \in \mathbb{R}^N$ for $i = 1,2, \hdots, M$ (\emph{increment} or \emph{correction} vectors)
	\item $\vx := \vz$
	\item At step $k$
	\begin{itemize}
		\item $i := (k-1) \bmod M + 1$ (cyclically visit constraints)
		\item $\vx_c := \vx - I_i$ 		(correction step)
		\item $\vx := P_{C_i}(\vx_c) $ (projection step)
		\item $I_i := \vx - \vx_c$ (update increment).
	\end{itemize}
\end{enumerate}
This simplifies when we consider applying Dykstra's method to a quadratic program like~\eqref{primal}. To do this,
we will not use the standard norm and standard inner product, but, given arbitrary vectors $\vf, \vg \in \mathbb{R}^N$,
a weighted norm:
\[ \langle \vf, \vg \rangle_w = \vf^T \mW \vg \]
\[ \| \vf \|_{w}^2 = \langle \vf, \vf \rangle_w = \vf^T \mW \vf. \]
%
Observe the the following equivalence:
\newcommand{\wnorm}[1]{\ensuremath{ \left \| #1 \right \|_{w} }}
\begin{align*} 
\frac{1}{2} \wnorm{\vx + {\mW^{-1} \vc} }^2 &=  \frac{1}{2} \vx^T \mW \vx +  \vx^T \mW \mW^{-1} \vc  + \frac{1}{2} \vc^T \mW^{-1} \vc \\
&= \frac{1}{2} \vx^T \mW \vx +  \vc^T \vx  + \emph{constant}.
\end{align*}
In other words, the quadratic program~\eqref{primal} is equivalent to the following best approximation problem:
\begin{equation}
\vx^* = P_C(\vz) = \arg \min_{\vx \in C}  || \vx - \vz ||^2_{w}, \text{ where } C = \cap_{i = 1}^M C_i\,. \label{qpbap}
\end{equation}
Here, $\vz = -\mW^{-1} \vc$ and we are projecting onto half-space constraints:
\newcommand{\rn}{\mathbb{R}^N}
\[C_i = \{ \vx \in \mathbb{R}^N : \va_i^T \vx \leq b_i \} = \{\vx \in \rn : \langle \tilde{\va}_i, \vx \rangle_w \leq b_i \}\,, \]
where~$\va_i^T$ is the $i$th row of constraint matrix~$\mA$ and $\tilde{\va}_i = \mW^{-1} \va_i$ is scaled so that
\[ \langle \tilde{\va}_i, \vx \rangle_w = \tilde{\va_i}^T \mW \vx = \va_i^T \mW^{-1} \mW \vx = \va_i^T \vx. \]
Dykstra's method relies on being able to project quickly onto each constraint~$C_i$. For simple half-space constraints, the projection can be computed as follows:
\begin{equation*}
P_{C_i}(\vx) =\arg \min_{\vx' \in C_i}  || \vx' - \vx ||^2 = \vx - \frac{ [\langle \tilde{\va}_i, \vx \rangle - b_i]^+}{\|\tilde{\va}_i \|^2 } \tilde{\va}_i
\label{proj}
\end{equation*}
where $[a]^+$ is~$a$ if $a > 0$, and is zero otherwise (a standard textbook result, see section~4.1.3 of~\cite{cegielski2012iterative}).
Since we are using the $\mW$-weighted norm and inner product, for our problem this becomes:
\begin{align*}
P_{C_i}(\vx) &= \vx - \frac{ [\va_i^T \vx - b_i]^+}{\va_i^T \mW^{-1} \va_i } (\mW^{-1} \va_i).
\end{align*}
Observe that this type of projection always takes the original vector~$\vx$ and just adds a constant times a vector $\mW^{-1} \va_i$ when visiting constraint~$i$. Therefore, when implementing the algorithm, we do not need to store an entire increment vector $I_i$ for each constraint. As long as we have the weight matrix $\mW$ and the constraint matrix $\textbf{A}$ stored up front, it will suffice to store a single extra constant ${ [\va_i^T \vx - b_i]^+}/{\va_i^T \mW^{-1} \va_i }$ in order to perform the correction step.

We re-write Dykstra's algorithm for quadratic programming as follows:

\noindent \emph{Dykstra's Algorithm applied to Quadratic Program~\eqref{primal}}
\begin{enumerate}
	\item $\vy := 0 \in \mathbb{R}^M$
	\item $\vx := -\mW^{-1}\vc$
	\item At step $k$:
	\begin{itemize}
		\item $i := (k-1) \bmod M + 1$ (cyclically visit constraints)
		\item $\vx := \vx + y_i\mW^{-1} \va_i$ (correction step)
		\item $\vx := \vx - \theta_i^+ \mW^{-1} \va_i$ (projection step)
		
		where $\theta_i^+ = \frac{ [\va_i^T \vx - b_i]^+}{\va_i^T \mW^{-1} \va_i }$
		\item $y_i := \theta_i^+ \geq 0.$
	\end{itemize}
\end{enumerate}
\subsection{Hildreth's Projection Method}
It is well known that for half-space constraints, Dykstra's method is equivalent to Hildreth's method~\cite{hildreth1957quadratic}. At first glance there appear to be slight differences, but these are easily accounted for. 

\noindent \emph{Hildreth's Algorithm}
\begin{enumerate}
	\item $\vy := 0 \in \mathbb{R}^M$
	\item $\vx := -\mW^{-1}\vc$
	\item At step $k$:
	\begin{itemize}
		\item $i := (k-1) \bmod M + 1$
		\item $\theta_i = \frac{ \va_i^T \vx - b_i}{\va_i^T \mW^{-1} \va_i }$
		\item $\delta = \min\{-\theta_i, y_i \}$
		\item $\vx := \vx + \delta \mW^{-1} \va_i$
		\item $y_i := y_i - \delta.$
	\end{itemize}
\end{enumerate}
Proving the equivalence between methods amounts simply to combining the two separate updates:
\begin{itemize}
	\item $\vx_c = \vx + y_i\mW^{-1} \va_i$ (correction step)
	\item $\vx' := \vx_c -  \frac{ [\va_i^T \vx_c - b_i]^+}{\va_i^T \mW^{-1} \va_i }\mW^{-1} \va_i$ (projection step)
\end{itemize}
from Dykstra's method. Combining these updates gives:
\[ \vx' = \vx + y_i\mW^{-1} \va_i -  \frac{ [\va_i^T (\vx + y_i\mW^{-1} \va_i) - b_i]^+}{\va_i^T \mW^{-1} \va_i }\mW^{-1} \va_i.\]
The outcome of this update depends on the value of 
\[ \va_i^T (\vx + y_i\mW^{-1} \va_i) - b_i = (\va_i^T\vx + y_i \va_i^T \mW^{-1} \va_i) - b_i  \]
which is greater than or equal to zero if and only if:
\[ -\theta_i = -\frac{\va_i^T \vx - b_i}{\va_i^T \mW^{-1} \va_i} \leq y_i.\]
Therefore, if $-\theta_i \leq y_i$, then $\delta = -\theta_i$ and the update is:
\begin{align*}
\vx' &= \vx + y_i\mW^{-1} \va_i -  \frac{ \va_i^T\vx - b_i + y_i\va_i^T\mW^{-1} \va_i}{\va_i^T \mW^{-1} \va_i }\mW^{-1} \va_i\\
&= \vx + y_i\mW^{-1} \va_i -  (\theta_i + y_i) \mW^{-1}\va_i\\
&= \vx - \theta_i \mW^{-1}\va_i\\
&= \vx + \delta \mW^{-1}\va_i.
\end{align*}
On the other hand, if $y_i < -\theta_i$, then $[\va_i^T (\vx + y_i\mW^{-1} \va_i) - b_i]^+ = 0$ and the update is $\vx' = \vx + y_i\mW^{-1} \va_i$.

\subsubsection{Nonnegative Dual Variables.} Note that $y_i \geq 0$ is maintained in either case: if $\delta = y_i$ then we update $y_i := y_i - \delta = 0$, and if $\delta = -\theta_i$ that means $-\theta_i \leq y_i \implies 0 \leq y_i + \theta_i = y_i - \delta$. This is important, because as we will show in the next section, these $y_i$ variables are the nonnegative dual variables in the optimization problem.

\subsection{Hildreth's Method, Coordinate Descent, and KKT Conditions}
Now we will show the equivalence between Dykstra's/Hildreth's method and performing coordinate descent on the negative of the dual function. The details shown here are a slight generalization of the result shown by Dax~\cite{daxtheory,simplealg}, updated to explicitly include a non-identity weight matrix $\mW$.

We first replace the maximization objective~\eqref{dual} with an equivalent quadratic program that is minimized:
\begin{align}
\min_{\vy} \,\, F(\vy) = \vb^T \vy + &\frac{1}{2} \left \|{\textbf{A}^T \vy + \vc } \right \|_{\mW^{-1}}^2 \label{fobj}\\
\text{s.t. } & \vy \geq 0 \notag
\end{align}
and consider the following optimization approach: let $\vy = 0$ and set $\vx = -\mW^{-1} \vc$ so that the first KKT condition is satisfied: $\mW\vx = -\mA^T \vy - \vc$. We will update $\vy$ one variable at a time so that $F(\vy)$ is strictly decreasing and so that the first and fourth KKT conditions are satisfied at every step: $\mW\vx_k = -\mA^T \vy_k - \vc$ and $\vy_k \geq 0$.

\subsubsection{Maintaining Two KKT Conditions}
When we visit the $i$th constraint, we perform the following steps:
\begin{enumerate}
	\item Compute $\theta_i = \frac{ \va_i^T \vx - b_i}{\va_i^T \mW^{-1} \va_i }$
	\item Set $\delta = \min\{-\theta_i, y_i \}$
	\item Update $\vx := \vx + \delta \mW^{-1} \va_i$, and $\vy := \vy - \ve_i\delta,$
\end{enumerate}
where $\ve_i$ is the vector with zeros everywhere except for a 1 in the $i$th position. We have already noted that the entries of $\vy$ computed by the method will always be nonnegative. To see that one other KKT condition is also maintained, assume that $\mW \vx = -\mA^T \vy - \vc$ holds at one point in time and perform a single update to get new primal and dual vectors $\vx'$ and $\vy'$:
\begin{align*}
\vx' = \vx + \delta \mW^{-1} \va_i \\
\vy' = \vy - \ve_i\delta.
\end{align*}
Then note:
\[ \vx' = \vx + \delta \mW^{-1} \va_i = \mW^{-1}(-\mA^T \vy - \vc)  + \delta \mW^{-1} \va_i  = \mW^{-1}(-\mA^T \vy + \delta\va_i- \vc)\]
and
\[ -\mA^T \vy' - \vc = -\mA^T (\vy - \ve_i\delta) - \vc = -\mA^T \vy - \vc + \delta \va_i \]
so these combine to yield $\mW \vx' = -\mA^T \vy' - \vc $.

\subsubsection{Coordinate Descent}
The connection to coordinate descent is seen by realizing that the value $\theta_i$ computed above uniquely minimizes the following one-variable function:
\begin{align*}
f(\theta)& = F(\vy + \ve_i \theta)\\
& = \vb^T \vy + \theta b_i + \frac{1}{2} \left \| \mA^T (\vy + \ve_i \theta ) +\vc \right \|_{\mW^{-1}}^2 \\
& = \vb^T \vy + \theta b_i + \frac{1}{2} \left \| -\mW \vx + \theta \va_i  \right \|_{\mW^{-1}}^2\\
& = \vb^T \vy + \theta b_i + \frac{1}{2} \left( \left( -\mW \vx + \theta \va_i  \right )^T \mW^{-1} \left( -\mW \vx + \theta \va_i  \right ) \right) \\
& = \vb^T \vy + \theta b_i + \frac{1}{2} \left( \vx^T \mW \vx + \theta^2 \va_i^T \mW^{-1} \va_i - 2\theta \va_i^T \vx \right) \\
& = \theta b_i + \frac{1}{2} \theta^2 \va_i^T \mW^{-1} \va_i - \theta \va_i^T \vx  + \frac12 \vx^T \mW \vx + \vb^T \vy.
\end{align*}
This is minimized when 
\[f'(\theta) = b_i - \va_i^T \vx + \theta \va_i^T \mW^{-1} \va_i = 0 \implies \theta = \frac{ \va_i^T \vx - b_i}{\va_i^T \mW^{-1} \va_i }.\]
If we can perform this update without violating constraint $y_i \geq 0$, then we do so, i.e. $\delta = \theta_i$ and update $y_i = y_i + \theta \geq 0$. Otherwise if $\theta < -y_i \leq 0$, we simply decrease $y_i$ as much as we can without violating the constraint (i.e. set $y_i = 0$). The function strictly decreases, since
\[f(0) - f(\delta) \geq \frac{1}{2} \delta^2 \va_i^T \mW^{-1} \va_i >0. \]
To see this, realize that $\delta = \nu \theta_i$ for some $\nu \in [0,1]$ and $\theta_i (\va_i^T \mW^{-1} \va_i) = (\va_i^T \vx - b_i)$, so
\begin{align*}
f(0) - f(\delta) &= \delta (\va_i^T \vx - b_i) - \delta^2/2 \va_i^T \mW^{-1} \va_i \\
&= \nu \theta_i (\theta_i \va_i^T \mW^{-1} \va_i ) - \frac12 \nu^2 \theta_i^2 \va_i^T \mW^{-1} \va_i \\
&= \frac12 \theta_i^2\nu^2 \va_i^T \mW^{-1} \va_i (2 - \nu)/\nu \\
&\geq \frac12 \delta^2 \va_i^T \mW^{-1} \va_i \hspace{.5cm} \text{ (since $\nu \in [0,1]$). }
\end{align*}

\end{appendix}

\end{document}